\renewcommand{\(}{\left\(}
\renewcommand{\)}{\right\)}
\renewcommand{\[}{\left\[}
\renewcommand{\]}{\right\]}
\newcommand{\qbinom}[2]{\genfrac{[}{]}{0pt}{}{#1}{#2}}
\numberwithin{equation}{section}
\theoremstyle{plain}
\newtheorem{theorem}{Theorem}[section]
\newtheorem{lemma}[theorem]{Lemma}
\newtheorem{corollary}[theorem]{Corollary}
\newtheorem{remark}[]{Remark}
\def\proof{\@ifnextchar[{\@oproof}{\@nproof}}
\def\@oproof[#1][#2]{\trivlist\item[\hskip\labelsep\textit{#2 Proof of\
		#1.}~]\ignorespaces}
\def\@nproof{\trivlist\item[\hskip\labelsep\textit{Proof.}~]\ignorespaces}
\def\@tocline#1#2#3#4#5#6#7{\relax
	\ifnum #1>\c@tocdepth 
	\else
	\par \addpenalty\@secpenalty\addvspace{#2}%
	\begingroup \hyphenpenalty\@M
	\@ifempty{#4}{%
		\@tempdima\csname r@tocindent\number#1\endcsname\relax
	}{%
		\@tempdima#4\relax
	}%
	\parindent\z@ \leftskip#3\relax \advance\leftskip\@tempdima\relax
	\rightskip\@pnumwidth plus4em \parfillskip-\@pnumwidth
	#5\leavevmode\hskip-\@tempdima
	\ifcase #1
	\or\or \hskip 1em \or \hskip 2em \else \hskip 3em \fi%
	#6\nobreak\relax
	\dotfill\hbox to\@pnumwidth{\@tocpagenum{#7}}\par
	\nobreak
	\endgroup
	\fi}
\begin{document}
	
	\title[Some identities of the sums-of-tails type]{Some identities of the sums-of-tails type}
	\author{Atul Dixit, Gaurav Kumar and Aviral Srivastava}
	\address{Department of Mathematics, Indian Institute of Technology Gandhinagar, Palaj, Gandhinagar 382355, Gujarat, India} 
	\email{adixit@iitgn.ac.in; kumargaurav@iitgn.ac.in; aviral.srivastava@iitgn.ac.in}
	\thanks{2020 \textit{Mathematics Subject Classification.} Primary 11P82; Secondary 11P84, 05A17.\\
		\textit{Keywords and phrases.   Sums-of-tails identities, Ramanujan's sigma function, weighted partition identities}}
	\maketitle

\begin{center}
	\dedicatory{\emph{Dedicated to Professor Krishnaswami Alladi towards the creation and efficient management of a journal devoted to mathematics influenced by Ramanujan}}
\end{center}
	\begin{abstract}
A new sums-of-tails identity involving two parameters $b$ and $d$ is obtained and is used to derive more results of similar type. One of Ramanujan's sums-of-tails identities from the Lost Notebook is shown to be a special case of our result. In the course of deriving Ramanujan's identity, we obtain a new result of combinatorial significance. Two new representations for an infinite series associated to a mock theta function are derived. Also, we give an application of an identity of Andrews and Onofri. 
	\end{abstract}

\tableofcontents	
\section{Introduction}\label{intro}
Ramanujan's Lost Notebook \cite{lnb} is a source of inspiration to studying the implications of several original concepts ensconced in it. One such concept is a pair of what are now known as the ``sums-of-tails'' identities \cite[p.~14]{lnb}. These are given by 
\begin{equation}\label{rsi1}
	\sum_{n=0}^{\infty}\left((-q;q)_{\infty}-(-q;q)_n\right)=(-q;q)_{\infty}\left(-\frac{1}{2}+\sum_{n=1}^{\infty}\frac{q^n}{1-q^n}\right)+\frac{1}{2}\sigma(q),
\end{equation}
and
\begin{equation}\label{rsi2}
	\sum_{n=0}^{\infty}\left(\frac{1}{(q;q^2)_{\infty}}-\frac{1}{(q;q^2)_{n+1}}\right)=\frac{1}{(q;q^2)_{\infty}}\left(-\frac{1}{2}+\sum_{n=1}^{\infty}\frac{q^{2n}}{1-q^{2n}}\right)+\frac{1}{2}\sigma(q),
		\end{equation}
where $\sigma(q)$ is a famous function of Ramanujan defined by
\begin{equation*}
	\sigma(q):=\sum_{n=0}^{\infty} \frac{q^{n(n+1)/2}}{(-q;q)_n}.
\end{equation*}
Here, and throughout the paper, we consider $|q|<1$ and use the standard $q$-series notation
\begin{align*}
		(A)_0 &:=(A;q)_0 =1, \qquad \\
		(A)_n &:=(A;q)_n  = (1-A)(1-Aq)\cdots(1-Aq^{n-1})\hspace{2mm}\text{for any positive integer}\hspace{1mm} n, \\
		(A)_{\infty} &:=(A;q)_{\infty}  = \lim_{n\to\infty}(A;q)_n, \qquad |q|<1.
\end{align*}
The function $\sigma(q)$ enjoys many surprising properties and is linked to many areas of Mathematics. Andrews' paper \cite{andrews1986}, which contains the first proof of \eqref{rsi1} and \eqref{rsi2}, also explains the interesting combinatorics of $\sigma(q)$. Andrews conjectured two properties of $\sigma(q)$ in \cite{andrewsmonthly86}. They were subsequently proved by Andrews, Dyson and Hickerson \cite{adh}, who also showed that $\sigma(q)$ is intimately connected with the arithmetic of $Q(\sqrt{6})$. This interesting connection with algebraic number theory enabled them to prove Andrews' conjectures. Another beautiful paper of Cohen \cite{cohen} involves a construction of a Maass waveform from $\sigma(q)$ and its companion series $\sigma^{*}(q):=2\sum_{n=1}^{\infty}(-1)^nq^{n^2}/(q;q^2)_n$. The function $\sigma(q)$ also serves as a prototypical example of Zagier's quantum modular forms \cite{zagierqmf}. 

Zagier \cite{zagiertop} found another identity of the sums-of-tails type associated with the Dedekind eta function $\eta(\tau)$ and used it to obtain values of a certain $L$-function at negative integers. Andrews, Jim\'{e}nez-Urroz and Ono \cite{AJK_Lfunctions} found two infinite families of sums-of-tails identities connected with the sums
\begin{align}\label{sot1}
\sum_{n=0}^{\infty}\left(\frac{(t)_{\infty}}{(a)_{\infty}}-\frac{(t)_n}{(a)_n}\right)
\end{align}
and
\begin{align}\label{sot2}
	\sum_{n=0}^{\infty}\left(\frac{(a)_{\infty}(b)_{\infty}}{(c)_{\infty}(q)_{\infty}}-\frac{(a)_n(b)_n}{(c)_n(q)_n}\right), 
\end{align}
and showed that they can be used to obtain values of general $L$-functions at negative integers. There is an extensive literature on sums-of tails identities and their applications. See \cite{bandix1} for an extensive bibliography on the topic as well as \cite{chan sot}, \cite{coogan}, \cite{coogan-ono}, and \cite{folsom sot} for a few further references.

In this paper we obtain a sums-of-tails identity for the series
\begin{equation}\label{sot3}
\sum_{n=0}^{\infty}\left(\frac{1}{(b)_{\infty}(d)_{\infty}}-\frac{1}{(b)_n(d)_n}\right).
\end{equation}
The motivation behind studying this sum is now given. As shown in Andrews \cite{andrews1986}, $\sigma(q)$ is the excess number of partitions into distinct parts with even rank over those with odd rank, and it occurs as the ``error term''  of the series on the left-hand side of \eqref{rsi1}, constructed using partitions into, again, distinct parts!

The question then arises - does there exist a corresponding sums-of-tails identity if we begin, instead, with partitions in which parts differ by at least $2$ (the so-called \emph{Rogers-Ramanujan partitions}) and see if the corresponding $\sigma$-type function associated to them occurs as the ``error term'' in the sums-of-tails identity wherein the series on the left-hand side is associated with the Rogers-Ramanujan partitions?

Recently, in \cite{dks2_rascoe}, we studied, in detail, some properties of $\sigma$-type function corresponding to the Rogers-Ramanujan partitions, where it was naturally denoted by $\sigma_2(q)$, that is, $\sigma_2(q)$ is the excess number of partitions into parts differing by at least $2$ with \emph{odd} rank over those with \emph{even} rank. In the same paper, it was shown that \cite[Theorem 1.1]{dks2_rascoe}
\begin{equation}\label{sigma_2(q)}
\sigma_2(q)=\sum_{n=0}^{\infty}\frac{(-1)^nq^{n^2}}{(-q)_n}.	
\end{equation}
As mentioned above, we were curious to see if there is a connection between $\sigma_2(q)$ and the series
\begin{equation}\label{sot4}
	\sum_{n=0}^{\infty}\left(\frac{1}{(q;q^5)_{\infty}(q^4;q^5)_{\infty}}-\frac{1}{(q;q^5)_n(q^4;q^5)_n}\right).
\end{equation}
While we haven't been able to figure out such a connection yet, if at all it exists, it naturally led us to consider the sum in \eqref{sot3}.
 
Observe that one cannot reduce \eqref{sot2} to \eqref{sot3}. Moreover, unlike $_1\psi_1$ summation formula and Heine's transformation which were the principal tools in the sums-of-tails identities associated with \eqref{sot1} and \eqref{sot2} respectively, there isn't any specific identity from the theory of basic hypergeometric series which is a natural choice here. Nevertheless, the following result holds.
	\begin{theorem}\label{theorem1}
	For $b,d\in\mathbb{C}$, we have\footnote{To set $b$ or $d$ (or both) equal to $1$, first multiply both sides by $(b)_\infty$ or $(d)_\infty$ (or $(b)_\infty(d)_\infty$), and then specialize them.}
	\begin{align}\label{theorem 1 eqn}
		\sum_{n=0}^{\infty}\bigg( \frac{1}{(b)_{\infty} (d)_{\infty}} - \frac{1}{(b)_n (d)_n} \bigg) &= \frac{1}{(b)_{\infty} (d)_{\infty}} \Bigg\{ \sum_{k=1}^{\infty}\frac{q^k}{1-q^k} - \sum_{n=1}^{\infty}\frac{(-d)^n q^{n(n-1)/2}}{1-q^n} - \sum_{m=1}^{\infty}\frac{(b/q)_m q^m}{1-q^m} \notag\\
		&\quad - \sum_{n=1}^{\infty}\frac{(-d)^n q^{n(n-1)/2}}{1-q^n} \sum_{m=1}^{\infty}\frac{(q^n)_m (b/q)_m q^m}{(q)_m} \Bigg\}.
	\end{align}
\end{theorem}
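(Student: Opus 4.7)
The strategy is to convert both sides of \eqref{theorem 1 eqn} into the same explicit double sum. Using the elementary relation $(x;q)_\infty/(x;q)_n=(xq^n;q)_\infty$, first rewrite
\begin{equation*}
\frac{1}{(b)_\infty(d)_\infty}-\frac{1}{(b)_n(d)_n}=\frac{1-(bq^n;q)_\infty(dq^n;q)_\infty}{(b)_\infty(d)_\infty},
\end{equation*}
so the claim reduces to proving that $T:=\sum_{n\ge 0}\bigl[1-(bq^n;q)_\infty(dq^n;q)_\infty\bigr]$ equals the expression inside the braces of \eqref{theorem 1 eqn}.

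Next, expand both $q$-Pochhammer products via Euler's identity $(z;q)_\infty=\sum_{k\ge 0}(-z)^kq^{k(k-1)/2}/(q;q)_k$, multiply, drop the $(j,k)=(0,0)$ term, and sum the resulting geometric series in $n$ after interchanging summations (justified by absolute convergence for $|q|<1$). This yields the symmetric representation
\begin{equation*}
T=-\sum_{\substack{j,k\ge 0\\(j,k)\ne(0,0)}}\frac{(-b)^j(-d)^k q^{j(j-1)/2+k(k-1)/2}}{(q;q)_j(q;q)_k(1-q^{j+k})}.
\end{equation*}

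The remainder of the proof is a rearrangement of this symmetric double sum into the asymmetric four-term shape on the right. I would split $T$ according to whether $k=0$ (which forces $j\ge 1$) or $k\ge 1$. The $k=0$ piece matches the first and third terms of the braces, provided one establishes the auxiliary identity
\begin{equation*}
\sum_{m=1}^\infty\frac{(b/q;q)_m q^m}{1-q^m}-\sum_{m=1}^\infty\frac{q^m}{1-q^m}=\sum_{j=1}^\infty\frac{(-b)^j q^{j(j-1)/2}}{(q;q)_j(1-q^j)},
\end{equation*}
which I would prove by expanding $(b/q;q)_m$ via the finite $q$-binomial theorem $(a;q)_m=\sum_i\binom{m}{i}_q(-a)^i q^{i(i-1)/2}$, interchanging summations, and evaluating the inner Lambert-type sum $\sum_{m\ge i}\binom{m}{i}_q q^m/(1-q^m)=q^i/[(q;q)_i(1-q^i)]$ for $i\ge 1$ by means of the $q$-binomial theorem $\sum_{\ell\ge 0}(q^i;q)_\ell q^\ell/(q;q)_\ell=1/(q;q)_i$.

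For the $k\ge 1$ piece, the key identity to prove is
\begin{equation*}
\sum_{m=0}^\infty\frac{(q^k;q)_m(b/q;q)_m q^m}{(q;q)_m}=\frac{1-q^k}{(q;q)_k}\sum_{j=0}^\infty\frac{(-b)^j q^{j(j-1)/2}}{(q;q)_j(1-q^{j+k})}\qquad(k\ge 1),
\end{equation*}
which follows from the same recipe (expand $(b/q;q)_m$ finitely, interchange sums, apply the $q$-binomial theorem to evaluate the inner $\ell$-series) together with the telescoping observation $(q^k;q)_j/(q;q)_{k+j}=(1-q^k)/[(q;q)_k(1-q^{k+j})]$. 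Multiplying this identity by $(-d)^k q^{k(k-1)/2}/(1-q^k)$ and summing over $k\ge 1$ converts the $k\ge 1$ part of $T$ into the last two terms of the braces, after peeling off the $m=0$ contribution (which produces the single-sum term $-\sum_n(-d)^n q^{n(n-1)/2}/(1-q^n)$). The main obstacle is precisely the bookkeeping of this last rearrangement: verifying the telescoping identity and justifying the interchange of the resulting triple sum. Once those are in place, the four-term expression inside the braces emerges directly from the split of $T$.
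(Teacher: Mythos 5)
Your proposal is correct, and it reaches \eqref{theorem 1 eqn} by a genuinely different route from the paper. The paper's proof applies an Abel-type limit lemma to $f(z)=\sum_{n\ge0}z^n/((b)_n(d)_n)$, using Euler's theorem and Heine's transformation to rewrite $f(z)$ with factors $1/(zq^j)_\infty$ before differentiating $(1-z)f(z)$ at $z=1$. You instead expand $1-(bq^n)_\infty(dq^n)_\infty$ by Euler's theorem and sum the geometric series in $n$ directly, which lands you on the symmetric double sum
\begin{equation*}
T=-\sum_{\substack{j,k\ge 0\\(j,k)\ne(0,0)}}\frac{(-b)^j(-d)^k q^{j(j-1)/2+k(k-1)/2}}{(q)_j(q)_k(1-q^{j+k})};
\end{equation*}
this is precisely the first form of the paper's symmetric identity \eqref{theorem 1 eqn sym}, which the authors obtain only \emph{after} the theorem, via a second application of Heine. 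Your passage from $T$ to the asymmetric four-term shape is then an elementary reproof of what the paper imports from Gupta's Theorem 1.16: I checked that both of your key identities hold, the inner evaluation $\sum_{m\ge i}\qbinom{m}{i}\frac{q^m}{1-q^m}=\frac{q^i}{(q)_i(1-q^i)}$ following from $\frac{(q^{i+1})_\ell}{1-q^{i+\ell}}=\frac{(q^i)_\ell}{1-q^i}$ plus the $q$-binomial theorem, and the $k\ge1$ identity from the same computation together with $(q^k)_i/(q)_{k+i}=(1-q^k)/[(q)_k(1-q^{k+i})]$. What your approach buys is self-containedness and symmetry (no Abel lemma, no Heine, no external citation for the conversion step); what the paper's approach buys is that the intermediate representation \eqref{main argument} of $f(z)$ is reusable machinery, and the $z$-differentiation template generalizes to the other sums-of-tails families in the literature. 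The only points to write out carefully in a final version are the justification of the triple-sum interchanges (absolute convergence for $|q|<1$, exactly as in the paper's verification of the hypotheses of Lemma \ref{lemma}) and the peeling of the $m=0$ term, both of which are routine.
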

A symmetric version of the above identity is given in \eqref{theorem 1 eqn sym}.

While the derivation of the known identity \cite[Equation (6.7)]{AJK_Lfunctions}, namely,
\begin{equation}\label{b=d=q}
\sum_{n=0}^{\infty}\left(\frac{1}{(q)_{\infty}^2}-\frac{1}{(q)_n^2}\right)
=\frac{1}{(q)_{\infty}^2}\left(\sum_{k=1}^{\infty}\frac{q^k}{1-q^k}-\sum_{n=1}^{\infty}\frac{(-1)^nq^{n(n+1)/2}}{1-q^n}\right)		
\end{equation}
is a trivial consequence of Theorem \ref{theorem1}, upon letting $b=d=q$, deriving Ramanujan's identity \eqref{rsi2} from it is not that easy as we shall see in this paper. One can derive \eqref{rsi2} from the sums-of-tails identity for the sum in \eqref{sot2} given in \cite[Theorem 2]{AJK_Lfunctions} in an easier way than ours (see \cite[p.~404--405]{AJK_Lfunctions}) since there are three free parameters in the sum in \eqref{sot2} whereas ours in \eqref{sot3} has only two. However, our derivation of \eqref{rsi2} in Section \ref{ramanujan sot} below gives the following new result along the way. 
\begin{theorem}\label{combination}
	We have
\begin{equation}\label{combination eqn}
\left(1-\frac{1}{q}\right)\sum_{m=1}^{\infty}(q;q^2)_{m-1}\frac{q^{2m}}{1-q^{2m}}=\sum_{n=1}^{\infty}\frac{1}{(-q)_{n-1}}\frac{q^{2n}}{1-q^{2n}}-\sum_{n=1}^{\infty}\frac{q^n}{1+q^n}.
\end{equation}
\end{theorem}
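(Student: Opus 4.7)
The plan is to extract Theorem \ref{combination} from a specialization of Theorem \ref{theorem1} and to match the resulting identity against Ramanujan's identity \eqref{rsi2}. The relation \eqref{combination eqn} then emerges as the bridge needed to reconcile the two.

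First, I would substitute $q\mapsto q^2$, $b=q$, and $d=0$ in \eqref{theorem 1 eqn}. Every term containing $(-d)^n$ vanishes, the prefactor $1/((q;q^2)_\infty(d;q^2)_\infty)$ collapses to $1/(q;q^2)_\infty$, and the factorization $(1/q;q^2)_m=(1-1/q)(q;q^2)_{m-1}$ turns the third sum on the right-hand side into precisely the left-hand side of \eqref{combination eqn}. Next, peel off the $n=0$ contribution $\frac{1}{(q;q^2)_\infty}-1$ from the left-hand side of the specialized identity, so that the residual sum $\sum_{n\ge 1}(\frac{1}{(q;q^2)_\infty}-\frac{1}{(q;q^2)_n})$, after reindexing $n\mapsto n+1$, coincides with the left-hand side of Ramanujan's identity \eqref{rsi2}. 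Substituting \eqref{rsi2} and rearranging then yields the intermediate relation
\begin{equation*}
\left(1-\tfrac{1}{q}\right)\sum_{m=1}^{\infty}(q;q^2)_{m-1}\frac{q^{2m}}{1-q^{2m}} = (q;q^2)_\infty-\tfrac{1}{2}-\tfrac{1}{2}(q;q^2)_\infty\sigma(q).
\end{equation*}

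It remains to show that the right-hand side above agrees with the right-hand side of \eqref{combination eqn}, i.e.,
\begin{equation*}
\tfrac{1}{2}(q;q^2)_\infty\sigma(q)=(q;q^2)_\infty-\tfrac{1}{2}-\sum_{n=1}^{\infty}\frac{1}{(-q)_{n-1}}\frac{q^{2n}}{1-q^{2n}}+\sum_{n=1}^{\infty}\frac{q^n}{1+q^n}.
\end{equation*}
To establish this, I would start from the defining series $\sigma(q)=\sum_{n\ge 0}q^{n(n+1)/2}/(-q;q)_n$, multiply through by $(q;q^2)_\infty$, and convert the resulting double sum into the two Lambert-type sums on the right via a Heine or Abel-type transformation, together with the classical Lambert identity $\sum_n q^n/(1+q^n)=\sum_n(-1)^{n-1}q^n/(1-q^n)$ for the cross-term bookkeeping.

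The principal obstacle is this last step: transforming the theta-like object $(q;q^2)_\infty\sigma(q)$ into a Lambert-series expression that aligns precisely with \eqref{combination eqn}. Identifying the correct finite-$n$ manipulation so that the denominators $(-q)_{n-1}$ arise naturally on one side while the alternating sum $\sum_{n\ge 1}q^n/(1+q^n)$ appears as the necessary correction is the delicate point. If a single clean transformation proves elusive, a fallback is a coefficient-by-coefficient verification leveraging the combinatorial interpretation of $\sigma(q)$ via ranks of partitions into distinct parts \cite{andrews1986} and the parallel interpretation of $\sigma_2(q)$ from \cite{dks2_rascoe}.
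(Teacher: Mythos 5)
Your proposal inverts the logical order of the paper, and this creates two problems. First, a circularity issue: in this paper, Ramanujan's identity \eqref{rsi2} is \emph{derived from} Theorem \ref{combination} (together with Theorem \ref{theorem1}), so you cannot use \eqref{rsi2} to prove Theorem \ref{combination} unless you invoke an external proof of \eqref{rsi2} (e.g., Andrews' 1986 proof). That is legitimate in an absolute sense, but it makes Theorem \ref{combination} a consequence of \eqref{rsi2} rather than a stepping stone toward it, which is the opposite of what the paper does: the paper proves Theorem \ref{combination} independently, by taking $t=q$ in a classical transformation from \cite[p.~29, Exercise 2]{gea}, isolating the $n=0$ term, dividing by $1-bq$, and evaluating the limit $b\to q^{-1}$ via L'H\^{o}pital's rule, then cleaning up with the elementary rearrangement in Lemma \ref{theorem3}. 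No appeal to $\sigma(q)$ or to \eqref{rsi2} is needed there.

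Second, and more seriously, your proof is incomplete exactly where you flag ``the principal obstacle.'' Your intermediate relation is correct (I checked that combining the specialization of Theorem \ref{theorem1} with \eqref{rsi2} does give $S=(q;q^2)_\infty-\tfrac12-\tfrac12(q;q^2)_\infty\sigma(q)$, where $S$ denotes the left side of \eqref{combination eqn}), but the remaining step is not a routine Heine/Abel manipulation: it is precisely the content of \cite[p.~8, Theorem 3.3]{ADY}, namely
\begin{equation*}
\frac{1}{(q;q^2)_{\infty}}\sum_{n=1}^{\infty} \frac{q^n}{(-q)_n(1-q^n)}=-\frac{1}{2}\sigma(q)+\frac{1}{(q;q^2)_{\infty}}\sum_{n=0}^{\infty}\frac{q^n}{1+q^n},
\end{equation*}
combined with Warnaar's identity $\sum_{n\geq1}q^n/(-q)_n=1-(q;q^2)_\infty$ and the elementary observation $\frac{q^{2n}}{(-q)_{n-1}(1-q^{2n})}=\frac{q^{2n}}{(-q)_{n}(1-q^{n})}$, so that $\sum_{n\geq1}\frac{q^n-q^{2n}}{(-q)_n(1-q^n)}=\sum_{n\geq1}\frac{q^n}{(-q)_n}$. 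These are exactly the two auxiliary results the paper deploys (in the reverse direction) when it derives \eqref{rsi2} from Theorem \ref{combination}. Without citing them explicitly, your sketch does not constitute a proof, and the proposed fallback of ``coefficient-by-coefficient verification'' cannot establish an identity of formal power series. The gap is fillable, but only by importing the same machinery the paper uses, at which point you have the paper's argument run backwards rather than a new proof.
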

The above identity has an interesting combinatorial interpretation given next.
\begin{theorem}\label{comb eqvt}
Let $n\geq1$. Let $p_1(n)$ denote the weighted count of partitions of $n$ with the largest part even, all parts less than the largest part are odd,  distinct and not equal to one less than the largest part, and the weight of such partition $\pi$ being $(-1)^{\#_o(\pi)}$, where $\#_o(\pi)$ is the number of odd parts of $\pi$.

Let $\tau_e(n)$ (resp. $\tau_o(n)$) denote the number of even (resp. odd) divisors of $n$. Let $p_2(n)$ be the weighted count of partitions of $n$ with the largest part even, no number between (and including) half the largest part and one less than the largest part can appear as a part, the weight of such partition $\pi$ being $(-1)^{\#(\pi)-\lambda(\pi)}$, where $\#(\pi)$ and $\lambda(\pi)$ denotes the number of parts of $\pi$ and the number of appearances of the largest part respectively. Then
\begin{equation}\label{partition identity}
p_1(n)-p_1(n+1)=p_2(n)+\tau_e(n)-\tau_o(n).
\end{equation}
In particular, for any $N\geq1$,
\begin{equation}\label{pi2}
p_1(N)=\sum_{n=1}^{N-1}\left(\tau_o(n)-\tau_e(n)-p_2(n)\right).
\end{equation}
\end{theorem}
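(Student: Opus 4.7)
The plan is to establish Theorem~\ref{comb eqvt} by reading \eqref{combination eqn} as an equality of generating functions and extracting the coefficient of $q^{N}$. Each of the three series appearing in Theorem~\ref{combination} is to be identified as the generating function of one of the weighted counts $p_1$, $p_2$ or $\tau_o-\tau_e$ that occur in \eqref{partition identity}.

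For the left-hand side, I would first show that
\[
\sum_{m=1}^{\infty}(q;q^{2})_{m-1}\,\frac{q^{2m}}{1-q^{2m}}=\sum_{n\geq 1}p_{1}(n)\,q^{n}.
\]
Given a partition counted by $p_1$ with largest part $2m$, the factor $q^{2m}/(1-q^{2m})$ accounts for the arbitrary multiplicity of $2m$, while $(q;q^{2})_{m-1}=\prod_{k=0}^{m-2}\bigl(1-q^{2k+1}\bigr)$ generates, with sign $(-1)^{\#_o(\pi)}$, the distinct odd parts drawn from $\{1,3,\dots,2m-3\}$ — precisely the odd parts allowed by the definition of $p_1$ (distinct, strictly less than $2m$, and not equal to $2m-1$). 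Multiplying this series by $(1-1/q)$ shifts indices so that the coefficient of $q^{N}$ on the left of \eqref{combination eqn} equals $p_1(N)-p_1(N+1)$.

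For the right-hand side I would verify the two identifications
\[
\sum_{m=1}^{\infty}\frac{1}{(-q)_{m-1}}\,\frac{q^{2m}}{1-q^{2m}}=\sum_{n\geq 1}p_2(n)\,q^{n},\qquad \sum_{n=1}^{\infty}\frac{q^{n}}{1+q^{n}}=\sum_{n\geq 1}\bigl(\tau_o(n)-\tau_e(n)\bigr)q^{n}.
\]
For the first, the forbidden range $\{m,m+1,\dots,2m-1\}$ in the definition of $p_2$ forces the remaining parts (when the largest part is $2m$) to come, with repetitions allowed, from $\{1,2,\dots,m-1\}$; their generating function weighted by $(-1)^{\#(\pi)-\lambda(\pi)}$ is $\prod_{k=1}^{m-1}(1+q^{k})^{-1}=1/(-q)_{m-1}$. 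For the second, expanding $q^{n}/(1+q^{n})=\sum_{j\geq 1}(-1)^{j-1}q^{nj}$ and regrouping by $N=nj$ gives the coefficient of $q^{N}$ as $\sum_{d\mid N}(-1)^{d-1}=\tau_o(N)-\tau_e(N)$.

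Substituting these three identifications into \eqref{combination eqn} and comparing the coefficients of $q^{N}$ yields \eqref{partition identity}. For \eqref{pi2} one telescopes $\sum_{n=1}^{N-1}\bigl(p_1(n)-p_1(n+1)\bigr)=p_1(1)-p_1(N)$ and notes that $p_1(1)=0$, since a partition of $1$ cannot have an even largest part; combining with the summed right-hand side of \eqref{partition identity} then gives the stated formula. The one place requiring real care is the bookkeeping in the $p_1$ generating function, in particular the fact that the restriction ``distinct odd parts not equal to $2m-1$'' is what produces the \emph{truncated} factor $(q;q^{2})_{m-1}$ rather than $(q;q^{2})_{m}$; once this is locked down, the remainder is a routine coefficient comparison.
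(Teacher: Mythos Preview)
Your proposal is correct and follows essentially the same route as the paper: identify each of the three series in \eqref{combination eqn} as the generating function of $p_1$, $p_2$, and $\tau_o-\tau_e$ respectively, compare coefficients to obtain \eqref{partition identity}, and then telescope (using $p_1(1)=0$) to get \eqref{pi2}. The only cosmetic difference is that the paper computes $\sum_n q^n/(1+q^n)$ by writing it as $\sum_n q^n/(1-q^{2n})-\sum_n q^{2n}/(1-q^{2n})$ rather than by expanding the geometric series as you do, but the two calculations are equivalent.
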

For example, if $n=6$, then the partitions enumerated by $p_1(6)$ are $6$ and $2+2+2$, and hence $p_1(6)=2$. Similarly, the only partitions enumerated by $p_1(7)$ is $6+1$ so that $p_1(7)=-1$. Also, the partitions enumerated by $p_2(6)$ are $6$, $4+1+1$ and $2+2+2$ so that $p_2(6)=3$. Since $\tau_e(6)=2=\tau_o(6)$, we see that  $p_1(6)-p_1(7)=2-(-1)=3=3+2-2=p_2(6)+\tau_e(6)-\tau_o(6)$.

We note that Merca \cite[Corollary 2.1]{merca} has represented $\tau_o(n)-\tau_e(n)$ as a finite sum involving partitions with restrictions on the parity of the number of parts.  

The two free parameters $b$ and $d$ in Theorem \ref{theorem1} allow for differentiation with respect to them, thereby giving rise to new identities. For example, Corollaries \ref{diff1} and \ref{diff2}, via differentiation and then specialization of $d$, yield the following identity. 
\begin{theorem}\label{thm series from ccd}
	We have
\begin{equation}\label{series from ccd}
\frac{1}{(q)_{\infty}^2}\sum_{n=1}^{\infty}\frac{(-1)^{n-1}nq^{n(n+1)/2}}{1-q^n}=\sum_{k=1}^{\infty}\left(\sum_{n=0}^{k-1}\frac{1}{(q)_n^2}\right)\frac{q^k}{1-q^k}=\frac{1}{(q)_{\infty}^2}\sum_{k=1}^{\infty}\left(\sum_{n=0}^{k-1}\frac{(-1)^nq^{n(n-1)/2}}{(q)_n(1-q^{k-n})}\right)\frac{(q)_kq^k}{1-q^k}.
\end{equation} 
In particular, the function on the left-hand side has positive coefficients in its power series expansion.
\end{theorem}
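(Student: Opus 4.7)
The plan is to derive \eqref{series from ccd} by combining Corollaries \ref{diff1} and \ref{diff2}, each of which arises by differentiating Theorem \ref{theorem1} (respectively its symmetric variant \eqref{theorem 1 eqn sym}) with respect to $d$ and then specializing $d=q$. For the first equality, we would specialize $b=q$ in Theorem \ref{theorem1}; since $(b/q)_m=(1)_m$ annihilates the last two sums on the right, the identity collapses to
\[
\sum_{n=0}^{\infty}\Bigl(\frac{1}{(q)_\infty(d)_\infty}-\frac{1}{(q)_n(d)_n}\Bigr)=\frac{B-g(d)}{(q)_\infty(d)_\infty},
\]
where $B:=\sum_{k\geq 1}q^k/(1-q^k)$ and $g(d):=\sum_{n\geq 1}(-d)^n q^{n(n-1)/2}/(1-q^n)$.

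We would then differentiate both sides in $d$ using $\partial_d(d)_\infty^{-1}=(d)_\infty^{-1}\sum_{k\geq 0}q^k/(1-dq^k)$, evaluate at $d=q$ (so that $\sum_{k\geq 0}q^k/(1-q^{k+1})=q^{-1}B$, with finite analogue $q^{-1}T_n$ for $T_n:=\sum_{j=1}^n q^j/(1-q^j)$), and multiply through by $q$. Using the splitting
\[
\frac{B}{(q)_\infty^2}-\frac{T_n}{(q)_n^2}=B\Bigl(\frac{1}{(q)_\infty^2}-\frac{1}{(q)_n^2}\Bigr)+\frac{B-T_n}{(q)_n^2},
\]
the $B$-proportional piece cancels across both sides via \eqref{b=d=q}. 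The residual left-hand contribution, after swapping the order of summation via $B-T_n=\sum_{j\geq n+1}q^j/(1-q^j)$, becomes the middle expression $\sum_{k\geq 1}(q^k/(1-q^k))\sum_{n=0}^{k-1}1/(q)_n^2$, while the residual right-hand contribution is $-qg'(q)/(q)_\infty^2=(q)_\infty^{-2}\sum_{n\geq 1}(-1)^{n-1}nq^{n(n+1)/2}/(1-q^n)$, the left-hand side of \eqref{series from ccd}. The second equality follows from an entirely parallel computation starting from the symmetric form \eqref{theorem 1 eqn sym}: the $d$-derivative there acts on the $(d/q)_n$ factors, and the first-order behavior $\partial_d(d/q)_n|_{d=q}=-q^{-1}(q)_{n-1}$ is what introduces the $(q)_k/(1-q^{k-n})$-weighted partial-Euler-type sum appearing in the third expression.

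For the positivity claim the middle expression is decisive: both $q^k/(1-q^k)=\sum_{j\geq 1}q^{jk}$ and $1/(q)_n^2$ (the latter being the generating function for ordered pairs of partitions with parts bounded by $n$) have non-negative integer power-series coefficients, so the double sum inherits non-negative, indeed strictly positive, coefficients of $q^N$ for every $N\geq 1$. The main obstacle I anticipate is the careful bookkeeping during differentiation—specifically isolating and cancelling the $B$-proportional terms via \eqref{b=d=q} so that only the desired residual survives; the analogous computation feeding the third expression is structurally the same but demands equal care in handling the vanishing of $(d/q)_n$ at $d=q$.
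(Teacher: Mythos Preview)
Your derivation of the first equality is correct and is essentially the paper's argument via Corollary~\ref{diff2}, just carried out directly: you specialize $b=q$ in Theorem~\ref{theorem1}, differentiate in $d$, and invoke \eqref{b=d=q} to cancel the $B$-proportional piece, whereas the paper first subtracts \eqref{b=d=q} to form \eqref{sot5}, specializes to get Corollary~\ref{diff2}, and only then differentiates. The positivity argument via the middle expression is also fine.

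The gap is in your route to the \emph{second} equality. The symmetric version \eqref{theorem 1 eqn sym} with $b=q$ is the same identity as Theorem~\ref{theorem1} with $b=q$ (both have the same left-hand side and hence the same right-hand side); differentiating it in $d$ therefore cannot produce anything beyond the first equality. In particular, the only $(d/q)_n$ that appears in \eqref{theorem 1 eqn sym} sits in the single sum $\sum_{n\ge 1}(d/q)_n q^n/(1-q^n)$, whose $d$-derivative at $d=q$ is $-q^{-1}\sum_{n\ge1}(q)_{n-1}q^n/(1-q^n)$---a single sum, not the double sum in the third expression of \eqref{series from ccd}. The structure you need, namely a factor $(b/q)_{j-n}$ sitting next to the Euler-type weight $(-b/q)^n q^{n(n-1)/2}$ inside a sum over $j$, is supplied only by the $q$-binomial rewriting \eqref{fromSOT}. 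The paper obtains the third expression by taking $b=d$ in \eqref{sot5} (Corollary~\ref{diff1}), differentiating in $b$, and setting $b=q$: the derivative of $(b/q)_{j-n}$ at $b=q$ contributes $-q^{-1}(q)_{j-n-1}$, and combining with $\qbinom{j}{n}$ gives precisely $(q)_j/\bigl((q)_n(1-q^{j-n})\bigr)$. This yields \eqref{first eq}, which together with \eqref{second eq} (your first equality) gives the second equality. So the missing ingredient is Corollary~\ref{diff1} (the $b=d$ specialization, not $b=q$), and the rewriting \eqref{fromSOT} that precedes it; neither is available from \eqref{theorem 1 eqn sym} alone. (A minor point: Corollaries~\ref{diff1} and~\ref{diff2} are not themselves obtained by differentiating Theorem~\ref{theorem1}; they are parameter specializations of \eqref{sot5}, and it is these corollaries that are subsequently differentiated.)
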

The series on the left-hand side of \eqref{series from ccd} arises in the representation of the generating function of the number of concave compositions of $n$ obtained by Andrews \cite[Theorem 1]{ccc}. Later, Andrews, Rhoades and Zwegers \cite[Theorem 1.2]{andrews-rhoades-zwegers} showed that the following expression containing the same series, that is,
\begin{equation*}
\frac{q^{-1/8}}{(q)_{\infty}^{3}}\left(2\sum_{n=1}^{\infty}\frac{(-1)^{n-1}nq^{n(n+1)/2}}{1-q^n}-\frac{1}{4}-2\sum_{n=1}^{\infty}\frac{q^n}{(1+q^n)^2}\right)
\end{equation*}
is a mock theta function of weight $1/2$ with shadow proportional to $\eta^3(\tau)$, where $q=e^{2\pi i\tau}$. 

\section{Proof of Theorem \ref{theorem1}}\label{main theorem}
We begin with a lemma of Abel type first derived in \cite[Proposition 2.1]{AJK_Lfunctions}; see \cite[pp. 158-160]{aar2} for a proof. It will be used in the proof of Theorem \ref{theorem1}.
\begin{lemma}\label{lemma}
Suppose that $f(z)=\sum_{n=0}^{\infty}\alpha(n)z^n$ is analytic for $|z|<1$. If $\alpha$ is a complex number for which $\sum_{n=0}^{\infty}\left(\alpha-\alpha(n)\right)<\infty$ and $\lim_{n\to\infty}n\left(\alpha-\alpha(n)\right)=0$, then	
\begin{align*}\lim_{z\to1^{-}}\frac{d}{dz}(1-z)f(z)=\sum_{n=0}^{\infty}\left(\alpha-\alpha(n)\right).	
\end{align*}
\end{lemma}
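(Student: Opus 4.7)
The plan is to rewrite $(1-z)f(z)$ in terms of the tails $\alpha-\alpha(n)$, differentiate, and then pass to the limit $z\to 1^-$ using Abel's theorem together with a short tail estimate. Starting from the trivial identity $\alpha=(1-z)\sum_{n=0}^{\infty}\alpha z^n$, valid for $|z|<1$, and subtracting from it the expression $(1-z)f(z)=(1-z)\sum_{n=0}^{\infty}\alpha(n)z^n$, I would obtain
\begin{align*}
(1-z)f(z) \;=\; \alpha \;-\; (1-z)g(z), \qquad g(z):=\sum_{n=0}^{\infty}\bigl(\alpha-\alpha(n)\bigr)z^n.
\end{align*}
The hypothesis that $\sum_{n=0}^{\infty}(\alpha-\alpha(n))$ converges forces $g$ to be analytic on $|z|<1$, and Abel's classical limit theorem on convergent power series yields $\lim_{z\to 1^-}g(z)=\sum_{n=0}^{\infty}(\alpha-\alpha(n))$.

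Differentiating the reformulation gives $\frac{d}{dz}(1-z)f(z)=g(z)-(1-z)g'(z)$. The first summand already supplies the desired answer in the limit, so the whole proof reduces to showing $(1-z)g'(z)\to 0$ as $z\to 1^-$. Writing $c_n:=n(\alpha-\alpha(n))$, the error term becomes $(1-z)\sum_{n=1}^{\infty}c_n z^{n-1}$, and this is precisely where the second hypothesis $c_n\to 0$ is used. Given $\varepsilon>0$, I would pick $N$ with $|c_n|<\varepsilon$ for $n\ge N$ and split the series at $N$: the initial finite segment is a polynomial in $z$ multiplied by $(1-z)$, hence is $O(1-z)$ near $z=1$, while the remaining tail is majorised in absolute value by $\varepsilon(1-z)\sum_{n\ge N}z^{n-1}\le \varepsilon$. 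Letting $z\to 1^-$ first and then $\varepsilon\to 0$ gives $(1-z)g'(z)\to 0$, which combined with the Abel limit of $g(z)$ finishes the proof.

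There is no serious obstacle in this argument; the lemma is essentially formal once the reformulation $(1-z)f(z)=\alpha-(1-z)g(z)$ is spotted. The only subtlety is making sure both hypotheses are genuinely used in the correct places: the summability assumption is exactly what lets Abel's theorem identify $\lim_{z\to 1^-}g(z)$ with $\sum(\alpha-\alpha(n))$, and the rate condition $n(\alpha-\alpha(n))\to 0$ is exactly what is needed (and in general sharp) to neutralize the $(1-z)g'(z)$ term, since $g'(z)$ can otherwise blow up at $z=1$ even when $g(1)$ exists. The rest is classical manipulation of convergent power series.
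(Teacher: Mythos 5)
Your proposal is correct and complete: the decomposition $(1-z)f(z)=\alpha-(1-z)g(z)$ with $g(z)=\sum_{n=0}^{\infty}\left(\alpha-\alpha(n)\right)z^n$, Abel's limit theorem applied to $g$ (justified by the convergence hypothesis), and the $\varepsilon$-splitting of $(1-z)g'(z)=(1-z)\sum_{n\geq1}n\left(\alpha-\alpha(n)\right)z^{n-1}$ using $n\left(\alpha-\alpha(n)\right)\to0$ together give exactly the stated limit, with each hypothesis used where it is genuinely needed. The paper itself gives no proof of this lemma, deferring to \cite[Proposition 2.1]{AJK_Lfunctions} and \cite[pp.~158--160]{aar2}, and your argument is essentially the standard one found in those references, so it matches the intended proof.
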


\begin{proof}[Theorem \textup{\ref{theorem1}}][]
The idea is to apply Lemma \ref{lemma} with 
\begin{equation*}
\alpha=\frac{1}{(b)_{\infty}(d)_{\infty}},\hspace{3mm} \alpha(n)=\frac{1}{(b)_{n}	(d)_{n}}. 
\end{equation*}
The hypotheses of the lemma are seen to be true using a logic similar to that given in \cite[pp.~161-162]{aar2} and so we will be brief. Indeed, employing Euler's theorem \cite[p.~9, Corollary (1.3.2)]{ntsr}
\begin{equation}\label{et} \sum_{k=0}^{\infty}\frac{(-z)^kq^{k(k-1)/2}}{(q)_k}=(z)_{\infty}\hspace{8mm}(|z|<\infty)
	\end{equation}
	in the second step below, we see that for any $b, d\in\mathbb{C}$,
\begin{align*}
\sum_{n=0}^{\infty}\left|\alpha-\alpha(n)\right|&\leq\frac{1}{(|b|;|q|)_{\infty}(|d|;|q|)_{\infty}}\sum_{n=0}^{\infty}\left|1-(bq^n)_{\infty}(dq^n)_{\infty}\right|\nonumber\\
&=\frac{1}{(|b|;|q|)_{\infty}(|d|;|q|)_{\infty}}\sum_{n=0}^{\infty}\left|\sum_{r,s=0\atop{(r,s)\neq(0,0)}}^{\infty}\frac{(-b)^r(-d)^sq^{\frac{r(r-1)}{2}+\frac{s(s-1)}{2}+n(r+s)}}{(q)_r(q)_s}\right|\nonumber\\
&\leq\frac{1}{(|b|;|q|)_{\infty}(|d|;|q|)_{\infty}}\sum_{n=0}^{\infty}|q|^n\sum_{r,s=0\atop{(r,s)\neq(0,0)}}^{\infty}\frac{|b|^r|d|^s|q|^{\frac{r(r-1)}{2}+\frac{s(s-1)}{2}}}{(|q|;|q|)_r(|q|;|q|)_s}\nonumber\\
&=\frac{1}{(|b|;|q|)_{\infty}(|d|;|q|)_{\infty}(1-|q|)}\left((-|b|;|q|)_{\infty}(-|d|;|q|)_{\infty}-1\right)\nonumber\\
&<\infty.
\end{align*}
Moreover, since $r+s-1\geq0$,
\begin{align*}
\lim_{n\to\infty}n\left(\alpha-\alpha(n)\right)=\lim_{n\to\infty}\frac{-nq^n}{(b;q)_{\infty}(d;q)_{\infty}}\sum_{r,s=0\atop{(r,s)\neq(0,0)}}^{\infty}\frac{(-b)^r(-d)^sq^{\frac{r(r-1)}{2}+\frac{s(s-1)}{2}+n(r+s-1)}}{(q)_r(q)_s}=0.
\end{align*}
Therefore, applying Lemma \ref{lemma}, we see that
\begin{align}\label{after applying}
\sum_{n=0}^{\infty}\left(\frac{1}{(b)_{\infty}(d)_{\infty}}-\frac{1}{(b)_{n}	(d)_{n}}\right)=\lim_{z\to1^{-}}\frac{d}{dz}(1-z)f(z),
\end{align}
where
\begin{equation*}
f(z):=\sum_{n=0}^{\infty}\frac{z^n}{(b)_n(d)_n}.
\end{equation*}
Using Euler's theorem \eqref{et} in the second step below, we see that
\begin{align}
	f(z) &=\frac{1}{(d)_{\infty}}\sum_{m=0}^{\infty}\frac{z^m}{(b)_m}(dq^m)_{\infty}\nonumber\\
	&=\frac{1}{(d)_{\infty}}\sum_{m=0}^{\infty}\frac{z^m}{(b)_m}\sum_{n=0}^{\infty}\frac{(-dq^m)^nq^{n(n-1)/2}}{(q)_n}\nonumber\\
	&=\frac{1}{(d)_{\infty}}\sum_{n=0}^{\infty}\frac{(-d)^nq^{n(n-1)/2}}{(q)_n}\sum_{m=0}^{\infty}\frac{(zq^n)^m}{(b)_m}\nonumber\\
	&=\frac{(q)_{\infty}}{(z)_{\infty}(b)_{\infty}(d)_{\infty}}\sum_{n=0}^{\infty}\frac{(z)_n(-d)^nq^{n(n-1)/2}}{(q)_n}\sum_{m=0}^{\infty}\frac{(zq^n)_m(bq^{-1})_mq^m}{(q)_m}\label{intermediate}\\
	&=\frac{(q)_{\infty}}{(b)_{\infty}(d)_{\infty}}\bigg\{\frac{1}{(z)_\infty}+\sum_{n=1}^{\infty}\frac{(-d)^nq^{n(n-1)/2}}{(q)_n(zq^n)_{\infty}}+\sum_{m=1}^{\infty}\frac{(bq^{-1})_mq^m}{(q)_m(zq^{m})_{\infty}}\nonumber\\
	&\quad+\sum_{n=1}^{\infty}\frac{(-d)^nq^{n(n-1)/2}}{(q)_n}\sum_{m=1}^{\infty}\frac{(bq^{-1})_mq^m}{(q)_m(zq^{m+n})_{\infty}}\bigg\},\label{main argument}
\end{align}
where, in the second last step, we used Heine's transformation \cite[p.~38]{gea} with $a=0, b=q$, then $c=b$ and $t=zq^n$.

Now multiply both sides of \eqref{main argument} by $(1-z)$, differentiate both sides with respect to $z$, then let $z\to1^{-}$ and use \eqref{after applying} to get \eqref{theorem 1 eqn}, where we repeatedly use the facts that
\begin{align*}
\left.\frac{d}{dz}\frac{(1-z)}{(z)_{\infty}}\right|_{z=1}=\left.\frac{-1}{(zq)_\infty}\sum_{k=1}^{\infty}\frac{d}{dz}\log(1-zq^k)\right|_{z=1}=\frac{1}{(q)_{\infty}}\sum_{k=1}^{\infty}\frac{q^k}{1-q^k},
\end{align*}
and, for $j\geq1$,
\begin{align*}
\left.\frac{d}{dz}\frac{(1-z)}{(zq^j)_{\infty}}\right|_{z=1}=\left.\frac{d}{dz}(1-z)\sum_{k=0}^{\infty}\frac{z^kq^{jk}}{(q)_k}\right|_{z=1}=-\frac{1}{(q^j)_{\infty}}.
\end{align*}
\end{proof}
\begin{remark}
The second iterate of Heine's transformation \cite[p.~38]{gea} with $c\to 0$ and $t=q$ gives
\begin{equation*}
	\sum_{m=0}^{\infty}\frac{(a)_m(b)_mq^m}{(q)_m}=\frac{(bq)_{\infty}}{(q)_{\infty}}\sum_{m=0}^{\infty}\frac{(b)_m(-a)^mq^{m(m+1)/2}}{(bq)_m(q)_m}.
	\end{equation*}
If we let $b=zq^n$ and $a=b/q$ in the above equation, we get
\begin{align*}
\sum_{m=0}^{\infty}\frac{(zq^n)_m(bq^{-1})_mq^m}{(q)_m}=\frac{(zq^{n+1})_{\infty}}{(q)_{\infty}}\sum_{m=0}^{\infty}\frac{(zq^n)_m(-b)^mq^{m(m-1)/2}}{(q)_m(zq^{n+1})_m}.
\end{align*} 
If we substitute the right-hand side of the above equation for the sum over $m$ in \eqref{intermediate} and then follow the same procedure as in the proof of Theorem \ref{theorem1}, we arrive at its following symmetric version:
\begin{align}\label{theorem 1 eqn sym}
	\sum_{n=0}^{\infty}\bigg( \frac{1}{(b)_{\infty} (d)_{\infty}} - \frac{1}{(b)_n (d)_n} \bigg) &= \frac{-1}{(b)_{\infty} (d)_{\infty}} \Bigg\{  \sum_{n=1}^{\infty}\frac{(-d)^n q^{n(n-1)/2}}{(q)_n(1-q^n)}+\sum_{m=1}^{\infty}\frac{(-b)^m q^{m(m-1)/2}}{(q)_{m}(1-q^m)}\nonumber\\
	&\quad+\sum_{n=1}^{\infty}\frac{(-d)^n q^{n(n-1)/2}}{(q)_n}\sum_{m=1}^{\infty}\frac{(-b)^m q^{m(m-1)/2}}{(q)_{m}(1-q^{m+n})} \Bigg\}\nonumber\\
	&=\frac{1}{(b)_{\infty} (d)_{\infty}} \Bigg\{2\sum_{k=1}^{\infty}\frac{q^k}{1-q^k}-\sum_{n=1}^{\infty}\frac{(d/q)_nq^n}{1-q^n}-\sum_{m=1}^{\infty}\frac{(b/q)_mq^m}{1-q^m}\nonumber\\
	&\quad-\sum_{n=1}^{\infty}\frac{(-d)^n q^{n(n-1)/2}}{(q)_n}\sum_{m=1}^{\infty}\frac{(-b)^m q^{m(m-1)/2}}{(q)_{m}(1-q^{m+n})} \Bigg\},
\end{align}
where, in the last step, we used \cite[Theorem 1.16]{gupta}.
\end{remark}

\section{Proof of Ramanujan's identity \eqref{rsi2}}\label{ramanujan sot}
	\begin{lemma}\label{theorem3}
	\begin{align*}
		\sum_{n=1}^{\infty} \frac{q^{2n}}{(-q)_n(1-q^n)}
		&= \frac{q}{1-q}-(1-q)\sum_{m=1}^{\infty}\frac{q^m}{(-q)_m(1-q^m)(1-q^{m+1})}
	\end{align*}
\end{lemma}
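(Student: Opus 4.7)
The plan is to prove this identity by direct manipulation, working from the right-hand side. The key observation is the partial-fraction identity
\[
\frac{1-q}{(1-q^m)(1-q^{m+1})}=\frac{1}{1-q^m}-\frac{q}{1-q^{m+1}},
\]
which is verified in one line by combining the fractions on the right.

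Applying this identity to the sum appearing on the right-hand side, I would split it as
\[
(1-q)\sum_{m=1}^{\infty}\frac{q^m}{(-q)_m(1-q^m)(1-q^{m+1})}
=\sum_{m=1}^{\infty}\frac{q^m}{(-q)_m(1-q^m)}-\sum_{m=1}^{\infty}\frac{q^{m+1}}{(-q)_m(1-q^{m+1})}.
\]
In the second sum I would shift the index by setting $n=m+1$ and then use $(-q)_n=(1+q^n)(-q)_{n-1}$ to rewrite $1/(-q)_{n-1}=(1+q^n)/(-q)_n$. This produces two pieces, one with $q^n$ in the numerator and one with $q^{2n}$, after which the $q^n$ pieces cancel against the first sum except for the boundary term at $n=1$.

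Collecting what remains, I expect to get
\[
(1-q)\sum_{m=1}^{\infty}\frac{q^m}{(-q)_m(1-q^m)(1-q^{m+1})}=\frac{q}{1-q^2}-\sum_{n=2}^{\infty}\frac{q^{2n}}{(-q)_n(1-q^n)}.
\]
Subtracting this from $q/(1-q)$ and simplifying $q/(1-q)-q/(1-q^2)=q^2/(1-q^2)$, the constant term combines with the tail sum to reconstruct $\sum_{n=1}^{\infty}q^{2n}/((-q)_n(1-q^n))$ (the $n=1$ term being exactly $q^2/(1-q^2)$), which is the left-hand side.

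There is no serious obstacle here: the argument is a short partial-fraction manipulation followed by an index shift and a boundary-term bookkeeping, and the only care needed is to track the $n=1$ term correctly when merging the two sums.
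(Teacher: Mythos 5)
Your proposal is correct and is essentially the paper's own proof run in reverse: the paper starts from the left-hand side, peels off the $n=1$ term, shifts the index, and uses the same partial-fraction identity $\frac{q}{1-q^{m+1}}-\frac{1}{1-q^m}=\frac{-(1-q)}{(1-q^m)(1-q^{m+1})}$ together with $(-q)_n=(1+q^n)(-q)_{n-1}$ to reach the right-hand side. All your intermediate identities check out, including the boundary term $q/(1-q)-q/(1-q^2)=q^2/(1-q^2)$ matching the $n=1$ summand.
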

\begin{proof}
		\begin{align*}
		\sum_{n=1}^{\infty}\frac{q^{2n}}{(-q)_n(1-q^n)} &=\frac{q^2}{1-q^2}+\sum_{n=2}^{\infty}\frac{q^{2n}}{(-q)_n(1-q^n)}\nonumber\\
		&=\frac{q^2}{1-q^2}+\sum_{m=1}^{\infty}\frac{q^{m+1}}{(-q)_m(1-q^{m+1})}\bigg\{1-\frac{1}{1+q^{m+1}}\bigg\}\nonumber\\
		&=\frac{q^2}{1-q^2}+\sum_{m=1}^{\infty}\frac{q^{m+1}}{(-q)_m(1-q^{m+1})}-\sum_{m=1}^{\infty}\frac{q^{m+1}}{(-q)_{m+1}(1-q^{m+1})}\nonumber\\
		&=\frac{q}{1-q}-\frac{q}{1-q^2}+\sum_{m=1}^{\infty}\frac{q^{m+1}}{(-q)_m(1-q^{m+1})}-\sum_{m=2}^{\infty}\frac{q^{m}}{(-q)_{m}(1-q^{m})}\nonumber\\
		&=\frac{q}{1-q}+\sum_{m=1}^{\infty}\frac{q^{m+1}}{(-q)_m(1-q^{m+1})}-\sum_{m=1}^{\infty}\frac{q^{m}}{(-q)_{m}(1-q^{m})}\nonumber\\
		&= \frac{q}{1-q}-(1-q)\sum_{m=1}^{\infty}\frac{q^m}{(-q)_m(1-q^m)(1-q^{m+1})}.
	\end{align*}
\end{proof}

\begin{proof}[Theorem \textup{\ref{combination}}][]
	From \cite[p.~29, Exercise 2]{gea} (see also \cite[p.~8, Entry 1.2.6]{aar2}) with $t=q$ that
\begin{align*}
	\sum_{n=0}^{\infty}\frac{(b)_{2n}q^{2n}}{(q^2;q^2)_n}&=\frac{(-bq)_{\infty}}{(-q)_{\infty}}\sum_{m=0}^{\infty}\frac{(b)_mq^m}{(q)_m(-bq)_m}.
\end{align*}
Separate out the $n=0$ term, subtract $1$ from both sides and divide by $(1-bq)$ to get
\begin{align}\label{ell-1}
	(1-b)\sum_{n=1}^{\infty}\frac{(bq^2)_{2n-2}q^{2n}}{(q^2;q^2)_n}&=\frac{1}{1-bq}\bigg\{-1+\frac{(-bq)_{\infty}}{(-q)_{\infty}}\sum_{m=0}^{\infty}\frac{(b)_mq^m}{(q)_m(-bq)_m}\bigg\}.
\end{align}
We now wish to let $b\to q^{-1}$ on both sides. The right-hand side  is easily seen to be of $\frac{0}{0}$ form. Hence we use L'H\^{o}pital's rule. Observe that
\begin{align}\label{ell 0}
\lim_{b\to q^{-1}} \frac{1}{1-bq}\bigg\{-1+\frac{(-bq)_{\infty}}{(-q)_{\infty}}\sum_{m=0}^{\infty}\frac{(b)_mq^m}{(q)_m(-bq)_m}\bigg\}=\frac{-1}{q}\cdot L,
\end{align}
where
\begin{align}\label{ell}
L:=	\lim_{b\to q^{-1}} \frac{d}{db}\bigg\{-1+\frac{(-bq)_{\infty}}{(-q)_{\infty}}\sum_{m=0}^{\infty}\frac{(b)_mq^m}{(q)_m(-bq)_m}\bigg\}.
\end{align}
Now
\begin{align}\label{ell1}
\frac{d}{db}\bigg\{-1+\frac{(-bq)_{\infty}}{(-q)_{\infty}}\sum_{m=0}^{\infty}\frac{(b)_mq^m}{(q)_m(-bq)_m}\bigg\}&=\frac{(-bq)_{\infty}}{(-q)_{\infty}}\Bigg[\sum_{m=0}^{\infty}\frac{(b)_mq^m}{(q)_m(-bq)_m}\sum_{n=1}^{\infty}\frac{q^n}{1+bq^n}\nonumber\\
&\quad+\sum_{m=1}^{\infty}\frac{(b)_mq^m}{(q)_m(-bq)_m}\left\{\sum_{n=0}^{m-1}\frac{-q^n}{1-bq^n}-\sum_{n=1}^{m}\frac{q^n}{1+bq^n}\right\}\Bigg].
\end{align}
Note that
\begin{align}\label{ell2}
&\sum_{m=1}^{\infty}\frac{(b)_mq^m}{(q)_m(-bq)_m}\sum_{n=0}^{m-1}\frac{-q^n}{1-bq^n}\nonumber\\
&=\sum_{n=0}^{\infty}\frac{-q^n}{1-bq^n}\sum_{m=n+1}^{\infty}\frac{(b)_mq^m}{(q)_m(-bq)_m}\nonumber\\
&=\frac{-1}{1-b}\sum_{m=1}^{\infty}\frac{(b)_mq^m}{(q)_m(-bq)_m}-\frac{q}{1-bq}\sum_{m=2}^{\infty}\frac{(b)_mq^m}{(q)_m(-bq)_m}+\sum_{n=2}^{\infty}\frac{-q^n}{1-bq^n}\sum_{m=n+1}^{\infty}\frac{(b)_mq^m}{(q)_m(-bq)_m}.
\end{align}
Observe that when we let $b\to q^{-1}$, the double sum on the right-hand side tends to zero. 
Therefore, from \eqref{ell}, \eqref{ell1} and \eqref{ell2}, we have
\begin{align}\label{ell eval}
	L=\frac{q}{2}-\frac{q}{1-q}+\sum_{n=1}^{\infty}\frac{q^n}{1+q^{n-1}}+q(1-q)\sum_{m=1}^{\infty}\frac{q^m}{(-q)_m(1-q^m)(1-q^{m+1})}.
\end{align}
From \eqref{ell 0} and \eqref{ell eval}, we see that in the limit $b\to q^{-1}$, \eqref{ell-1} becomes
\begin{align*}
	\left(1-\frac{1}{q}\right)\sum_{n=1}^{\infty}\frac{(q;q^2)_{n-1}q^{2n}}{1-q^{2n}}&=\frac{-1}{2}+\frac{1}{1-q}-\sum_{n=0}^{\infty}\frac{q^n}{1+q^n}-(1-q)\sum_{m=1}^{\infty}\frac{q^m}{(-q)_m(1-q^m)(1-q^{m+1})}.
\end{align*}
Finally, to obtain the desired result, we invoke Lemma \ref{theorem3} in the above equation and simplify.
\end{proof}

We are now ready to prove Ramanujan's sums-of-tails identity.
\begin{proof}[ \textup{\eqref{rsi2}}][]
	In Theorem \eqref{theorem1}, separate $n=0$ term on the left-hand side, replace $n$ by $n+1$, $q$ by $q^2$, and then set $d=0,b=q$ so as to have
\begin{align}\label{app of main theorem}
	\sum_{n=0}^{\infty}\bigg\{ \frac{1}{(q;q^2)_{\infty}} - \frac{1}{(q;q^2)_{n+1}} \bigg\}
	&=\frac{1}{(q;q^2)_{\infty}}\sum_{k=1}^{\infty}\frac{q^{2k}}{1-q^{2k}}+1-\frac{1}{(q;q^2)_{\infty}} \bigg\{1+\sum_{m=1}^{\infty}\frac{(1/q;q^2)_mq^{2m}}{1-q^{2m}} \bigg\}\nonumber\\
	&=\frac{1}{(q;q^2)_{\infty}}\bigg\{\frac{-1}{2}+\sum_{k=1}^{\infty}\frac{q^{2k}}{1-q^{2k}}\bigg\}+1-\frac{1}{(q;q^2)_{\infty}}\nonumber\\&+\frac{1}{(q;q^2)_{\infty}} \bigg\{\sum_{k=0}^{\infty}\frac{q^k}{1+q^k}-\sum_{n=1}^{\infty} \frac{q^{2n}}{(-q)_n(1-q^n)} \bigg\},
\end{align}
where, in the last step, we used Theorem \ref{combination}. 

From \cite[p.~378]{Wptf} and Euler's theorem $(-q;q)_{\infty}=1/(q;q^2)_{\infty}$, we see that
\begin{align*}
	\frac{1}{(q;q^2)_{\infty}}-1=\frac{1}{(q;q^2)_{\infty}}\sum_{n=1}^{\infty}\frac{q^n}{(-q)_n}.
\end{align*}
Substituting the above equation in \eqref{app of main theorem}, we get
\begin{align}\label{final}
	\sum_{n=0}^{\infty}\bigg\{ \frac{1}{(q;q^2)_{\infty}} - \frac{1}{(q;q^2)_{n+1}} \bigg\}
	&=\frac{1}{(q;q^2)_{\infty}}\bigg\{\frac{-1}{2}+\sum_{k=1}^{\infty}\frac{q^{2k}}{1-q^{2k}}\bigg\}+\frac{1}{(q;q^2)_{\infty}} \bigg\{\sum_{k=0}^{\infty}\frac{q^k}{1+q^k}\nonumber\\&\quad-\sum_{n=1}^{\infty} \frac{q^n}{(-q)_n(1-q^n)} \bigg\}.
\end{align}
Now use the elementary fact $1/(-q)_n=(q;q^2)_n/(q^{n+1})_n$ to write
\begin{align*}
\frac{1}{(q;q^2)_{\infty}}\sum_{n=1}^{\infty} \frac{q^n}{(-q)_n(1-q^n)}&=\sum_{n=1}^{\infty} \frac{q^n}{(q^n)_{n+1}(q^{2n+1};q^2)_{\infty}}\nonumber\\
&=-\frac{1}{2}\sigma(q)+\frac{1}{(q;q^2)_{\infty}}\sum_{n=0}^{\infty}\frac{q^n}{1+q^n},
\end{align*}
where the last step follows from \cite[p.~8, Theorem 3.3]{ADY}. Finally substituting the above equation in \eqref{final} leads us to \eqref{rsi2}, thereby completing the proof.
\end{proof}

\section{A combinatorial equivalent of Theorem \ref{combination} and its proof}
In this section, we prove Theorem \ref{comb eqvt}.
Observe that
\begin{align*}
\sum_{m=1}^{\infty}(q;q^2)_{m-1}\frac{q^{2m}}{1-q^{2m}}=\sum_{m=1}^{\infty}(1-q)(1-q^3)\cdots(1-q^{2m-3})\frac{q^{2m}}{1-q^{2m}}.
\end{align*}
Thus, in a typical partition generated by the above sum, the largest part must be even, in this case $2m$, and is allowed to repeat. However, any other part less than $2m$ must be odd, distinct and strictly less than $2m-1$. Moreover, whenever an odd part appears, it comes with the weight $(-1)$. Thus, we get a partition $\pi$ weighted by $(-1)^{\#_o(\pi)}$, where $\#_o(\pi)$ is the number of odd parts of $\pi$, which is clearly enumerated by $p_1(n)$. Thus,
\begin{align}\label{one}
\left(1-\frac{1}{q}\right)	\sum_{m=1}^{\infty}(q;q^2)_{m-1}\frac{q^{2m}}{1-q^{2m}}=\sum_{n=1}^{\infty}p_1(n)q^{n}-\sum_{n=1}^{\infty}p_1(n)q^{n-1}=\sum_{n=1}^{\infty}\left(p_1(n)-p_1(n+1)\right)q^{n},
	\end{align}
since $p_1(1)=0$. Next, consider 
\begin{equation}\label{two}
\sum_{n=1}^{\infty}\frac{1}{(-q)_{n-1}}\frac{q^{2n}}{1-q^{2n}}=\sum_{n=1}^{\infty}\frac{1}{(1+q)\cdots(1+q^{n-1})}\frac{q^{2n}}{1-q^{2n}}.
\end{equation}
This is the generating function of $p_2(n)$ because, a typical partition has $2n$ as its largest part, no number between (and including) $n$ and $2n-1$ can appear as a part, and every occurrence of a part less than $n$ comes with the weight $-1$. 
Finally, 
\begin{align}\label{three}
\sum_{n=1}^{\infty}\frac{q^n}{1+q^n}=\sum_{n=1}^{\infty}\frac{q^n}{1-q^{2n}}-\sum_{n=1}^{\infty}\frac{q^{2n}}{1-q^{2n}}=\sum_{n=1}^{\infty}\left(\tau_o(n)-\tau_e(n)\right)q^n.
\end{align}
From \eqref{one}, \eqref{two}, \eqref{three} and \eqref{combination eqn}, we arrive at \eqref{partition identity}.

Equation \eqref{pi2} follows by summing both sides of \eqref{partition identity} from $n=1$ to $N-1$ and making use of the fact that $p_1(1)=0$.

\section{Further sums-of-tails identities}

	\begin{theorem}
		For $b, d\in\mathbb{C}$,
	\begin{align}\label{sot5}
		\sum_{n=0}^{\infty} \left((bq^n)_{\infty}(dq^n)_{\infty}-(q^{n+1})_{\infty}^2\right) = \sum_{j=1}^{\infty} \frac{q^j}{1-q^j} \sum_{n=0}^{j} \qbinom{j}{n} \left(\frac{b}{q}\right)_{j-n} \left(\frac{-d}{q}\right)^n q^{n(n-1)/2} - \sum_{j=1}^{\infty} \frac{(-1)^jq^{j(j+1)/2}}{(1-q^j)}.
	\end{align}
\end{theorem}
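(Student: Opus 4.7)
The plan is to reduce the identity to Theorem \ref{theorem1} and its specialization \eqref{b=d=q} via the elementary observation $(bq^n)_\infty(dq^n)_\infty = (b)_\infty(d)_\infty/((b)_n(d)_n)$. First I would write
\begin{align*}
\sum_{n=0}^\infty \bigl((bq^n)_\infty(dq^n)_\infty - (q^{n+1})_\infty^2\bigr) = \sum_{n=0}^\infty \bigl((bq^n)_\infty(dq^n)_\infty - 1\bigr) - \sum_{n=0}^\infty \bigl((q^{n+1})_\infty^2 - 1\bigr),
\end{align*}
noting that each summand on the right is $O(q^n)$ as $n\to\infty$, so both series converge absolutely. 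Multiplying Theorem \ref{theorem1} through by $(b)_\infty(d)_\infty$ (absorbing the constant into the absolutely convergent sum) evaluates the first series, while the same operation on \eqref{b=d=q} with factor $(q)_\infty^2$ evaluates the second.

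Subtracting these two evaluations, the common $\sum_{k\geq 1} q^k/(1-q^k)$ contributions cancel, leaving
\begin{align*}
\sum_{n=1}^\infty \frac{(-d)^n q^{n(n-1)/2}}{1-q^n} + \sum_{m=1}^\infty \frac{(b/q)_m q^m}{1-q^m} + D - \sum_{j=1}^\infty \frac{(-1)^j q^{j(j+1)/2}}{1-q^j},
\end{align*}
where $D$ denotes the double sum appearing in \eqref{theorem 1 eqn}. The final piece already matches the second summand on the right-hand side of \eqref{sot5}, so it remains to identify the three preceding terms with $\sum_{j\geq 1}\tfrac{q^j}{1-q^j}I(j)$, where $I(j):=\sum_{n=0}^j \qbinom{j}{n}(b/q)_{j-n}(-d/q)^n q^{n(n-1)/2}$.

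For this final step I would split $I(j)$ according to the value of $n$. The $n=0$ contribution combines into $\sum_{j\geq 1}(b/q)_j q^j/(1-q^j)$, matching the $m$-sum above; the $n=j$ contribution combines into $\sum_{j\geq 1}(-d)^j q^{j(j-1)/2}/(1-q^j)$ after using $q^j(-d/q)^j=(-d)^j$, matching the $n$-sum above. For the middle range $1\leq n\leq j-1$ I would substitute $m=j-n$ to obtain a double sum over $m,n\geq 1$, which collapses to $D$ upon applying the $q$-binomial identity
\begin{align*}
\frac{1}{1-q^{n+m}}\qbinom{n+m}{n}=\frac{(q^n)_m}{(q)_m(1-q^n)},
\end{align*}
immediate from $\qbinom{n+m}{n}=(q)_{n+m}/((q)_n(q)_m)$ and $(q^n)_m=(q)_{n+m-1}/(q)_{n-1}$. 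The only real obstacle is this bookkeeping---spotting the correct change of variables and the $q$-binomial identity that converts the middle double series back into $D$. Once this is in place, all four pieces line up and \eqref{sot5} follows.
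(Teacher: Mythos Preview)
Your proposal is correct and follows essentially the same route as the paper: both arguments multiply Theorem~\ref{theorem1} and its specialization \eqref{b=d=q} through by the appropriate infinite products, subtract, and identify the three sums on the right of \eqref{theorem 1 eqn} with $\sum_{j\geq 1}\frac{q^j}{1-q^j}I(j)$ via the change of variables $j=m+n$ and the $q$-binomial identity you state. The only cosmetic difference is that the paper performs the identification first (yielding \eqref{fromSOT}) and then subtracts, whereas you subtract first and then split $I(j)$; the underlying computation is the same.
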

\begin{proof}
From Theorem \ref{theorem1},
\begin{align}\label{fromSOT}
	&\sum_{n=0}^{\infty}\bigg\{ \frac{1}{(b)_{\infty} (d)_{\infty}} - \frac{1}{(b)_n (d)_n} \bigg\} \nonumber\\
	&= \frac{1}{(b)_{\infty} (d)_{\infty}} \Bigg\{ \sum_{k=1}^{\infty}\frac{q^k}{1-q^k} - \sum_{m=1}^{\infty}\frac{(b/q)_m q^m}{1-q^m}
	 - \sum_{n=1}^{\infty}\frac{(-d/q)^n q^{n(n-1)/2}}{(q)_n} \sum_{m=0}^{\infty}\frac{(q)_{m+n} (b/q)_m q^{m+n}}{(q)_m(1-q^{m+n})} \Bigg\}\nonumber\\
&= \frac{1}{(b)_{\infty} (d)_{\infty}} \Bigg\{ \sum_{k=1}^{\infty}\frac{q^k}{1-q^k} - \sum_{m=1}^{\infty}\frac{(b/q)_m q^m}{1-q^m} 
	- \sum_{j=1}^{\infty} \frac{q^j}{1-q^j} \sum_{n=1}^{\infty} \qbinom{j}{n} \left(\frac{b}{q}\right)_{j-n} \left(\frac{-d}{q}\right)^n q^{n(n-1)/2} \Bigg\}\nonumber\\ 
	&= \frac{1}{(b)_{\infty} (d)_{\infty}} \Bigg\{ \sum_{k=1}^{\infty}\frac{q^k}{1-q^k} - \sum_{j=1}^{\infty} \frac{q^j}{1-q^j} \sum_{n=0}^{\infty} \qbinom{j}{n} \left(\frac{b}{q}\right)_{j-n} \left(\frac{-d}{q}\right)^n q^{n(n-1)/2} \Bigg\}.
\end{align}
Multiply both sides of \eqref{b=d=q} by $\frac{(q)_{\infty}^2}{(b)_{\infty}(d)_{\infty}}$ to get
\begin{align}\label{fromAndrews}
	\sum_{n=0}^{\infty}\bigg\{ \frac{1}{(b)_{\infty} (d)_{\infty}} - \frac{(q^{n+1})_{\infty}^2}{(b)_{\infty} (d)_{\infty}} \bigg\} = \frac{1}{(b)_{\infty} (d)_{\infty}} \bigg\{ \sum_{k=1}^{\infty}\frac{q^k}{1-q^k} - \sum_{n=1}^{\infty} \frac{(-1)^nq^{n(n+1)/2}}{1-q^n} \bigg\}.
\end{align}
Equation \eqref{sot5} now follows from subtracting (\ref{fromSOT}) from (\ref{fromAndrews}), and then multiplying both sides by $(b)_\infty(d)_\infty$.
\end{proof}
\begin{corollary}\label{diff1}
	For $b\in\mathbb{C}$,
	\begin{align*}
		\sum_{n=0}^{\infty} \left((bq^n)_{\infty}^2-(q^{n+1})_{\infty}^2\right) = \sum_{j=1}^{\infty} \frac{q^j}{1-q^j} \sum_{n=0}^{j} \qbinom{j}{n} \left(\frac{b}{q}\right)_{j-n} \left(\frac{-b}{q}\right)^n q^{n(n-1)/2} - \sum_{j=1}^{\infty} \frac{(-1)^jq^{j(j+1)/2}}{(1-q^j)}.
	\end{align*}
\end{corollary}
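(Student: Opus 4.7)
The plan is immediate: I would obtain Corollary \ref{diff1} as the $d = b$ specialization of the preceding theorem (equation \eqref{sot5}). Since that theorem is stated for all $b, d \in \mathbb{C}$, no analytic continuation or limiting argument is needed, and direct substitution is legitimate.

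First I would check the left-hand side: the product $(bq^n)_{\infty}(dq^n)_{\infty}$ becomes $(bq^n)_{\infty}^2$ under $d \mapsto b$, while the subtracted term $(q^{n+1})_{\infty}^2$ is unaffected, so the left-hand side of \eqref{sot5} passes to the left-hand side of the corollary term by term. Next I would inspect the right-hand side of \eqref{sot5}: only the factor $\left(-d/q\right)^n$ depends on $d$, and it becomes $\left(-b/q\right)^n$ after the substitution. Every other ingredient --- the $q$-binomial coefficient $\qbinom{j}{n}$, the shifted factorial $(b/q)_{j-n}$, the Gaussian weight $q^{n(n-1)/2}$, the outer factor $q^j/(1-q^j)$, and the entire second sum $\sum_{j\ge 1} (-1)^j q^{j(j+1)/2}/(1-q^j)$ --- is independent of $d$ and remains intact. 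Assembling these observations recovers the stated formula.

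There is essentially no obstacle: the corollary is a diagonal specialization of the theorem, and the only bookkeeping is verifying that all $d$-dependent factors collapse correctly to their $d=b$ counterparts. If I wanted to flag any subtlety at all, it would be to note that the convergence of both sides is inherited from the theorem (the hypotheses on $b,d\in\mathbb{C}$ cover the diagonal case), so no separate justification of summability is required.
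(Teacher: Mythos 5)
Your proposal is correct and is exactly the paper's proof: Corollary \ref{diff1} is obtained by setting $d=b$ in \eqref{sot5}, and your term-by-term verification of the substitution is just a more explicit version of that one-line argument.
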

\begin{proof}
Set $d=b$ in \eqref{sot5}.
\end{proof}
\begin{corollary}\label{diff2}
	For $d\in\mathbb{C}$,
	\begin{align}\label{diff2 eqn}
		\sum_{n=0}^{\infty} (q^{n+1})_{\infty} \left((dq^n)_{\infty}-(q^{n+1})_{\infty}\right) = \sum_{j=1}^{\infty} \frac{(-1)^j(d^j-q^j)q^{j(j-1)/2}}{(1-q^j)}.
	\end{align}
\end{corollary}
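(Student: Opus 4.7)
The plan is to obtain Corollary \ref{diff2} as the specialization $b=q$ of the identity \eqref{sot5}. Substituting $b=q$ on the left-hand side turns the factor $(bq^n)_{\infty}$ into $(q^{n+1})_{\infty}$, so the summand factors as $(q^{n+1})_{\infty}\bigl((dq^n)_{\infty}-(q^{n+1})_{\infty}\bigr)$, which is exactly the left-hand side of \eqref{diff2 eqn}.

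The substantive observation is what happens on the right-hand side of \eqref{sot5} under $b=q$. The Pochhammer $(b/q)_{j-n}$ becomes $(1;q)_{j-n}$, whose leading factor $1-1=0$ forces $(1;q)_{j-n}=0$ for every $j-n\geq 1$, while $(1;q)_{0}=1$. Consequently the inner finite sum over $n$ collapses to its single top term $n=j$, contributing
$$\qbinom{j}{j}\left(\frac{-d}{q}\right)^{j}q^{j(j-1)/2}=(-d)^{j}q^{j(j-1)/2-j},$$
and after multiplying by $q^{j}/(1-q^{j})$ the outer sum reduces to $\sum_{j=1}^{\infty}(-d)^{j}q^{j(j-1)/2}/(1-q^{j})$.

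Combining this with the remaining theta-type tail $-\sum_{j=1}^{\infty}(-1)^{j}q^{j(j+1)/2}/(1-q^{j})$ from \eqref{sot5} and pulling out the common factor $(-1)^{j}q^{j(j-1)/2}$ via $q^{j(j+1)/2}=q^{j}\cdot q^{j(j-1)/2}$ produces $\sum_{j=1}^{\infty}(-1)^{j}q^{j(j-1)/2}(d^{j}-q^{j})/(1-q^{j})$, which is the right-hand side of \eqref{diff2 eqn}. The entire argument is a substitution together with the vanishing of $(1;q)_{j-n}$, so there is no genuine obstacle; the corollary is the natural companion of Corollary \ref{diff1}, obtained by setting $b=q$ in \eqref{sot5} rather than $d=b$.
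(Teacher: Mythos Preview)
Your proof is correct and is exactly the approach the paper uses: set $b=q$ in \eqref{sot5} and simplify. You have merely made explicit the ``simplify'' step---the collapse of the inner sum via $(1;q)_{j-n}=0$ for $j>n$ and the recombination $q^{j(j+1)/2}=q^{j}\cdot q^{j(j-1)/2}$---which the paper leaves to the reader.
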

\begin{proof}
Set $b=q$ in \eqref{sot5} and simplify.
\end{proof}
\begin{corollary}
	For $d\in\mathbb{C}$,
	\begin{align*}
\sum_{j=1}^{\infty} \frac{q^j}{1-q^j} \sum_{n=0}^{j} (-1)^n\qbinom{j}{n} \left(\frac{d}{q}\right)_{j-n}  q^{n(n-1)/2}=\sum_{j=1}^{\infty} \frac{(-d)^jq^{j(j-1)/2}}{(1-q^j)}.
\end{align*}
\end{corollary}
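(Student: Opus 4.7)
The plan is to exploit a symmetry hidden inside Theorem \eqref{sot5}. Its left-hand side $\sum_{n\ge 0}\bigl((bq^n)_\infty(dq^n)_\infty-(q^{n+1})_\infty^2\bigr)$ is manifestly symmetric in $b$ and $d$, and the ``extra'' series $\sum_{j\ge1}(-1)^jq^{j(j+1)/2}/(1-q^j)$ on the right does not involve $b$ or $d$. Consequently the double sum
\[
S(b,d):=\sum_{j=1}^{\infty}\frac{q^j}{1-q^j}\sum_{n=0}^{j}\qbinom{j}{n}\Bigl(\frac{b}{q}\Bigr)_{j-n}\Bigl(\frac{-d}{q}\Bigr)^n q^{n(n-1)/2}
\]
must satisfy $S(b,d)=S(d,b)$ as an identity in $b,d\in\mathbb{C}$. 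This symmetry is also transparent from the symmetric form \eqref{theorem 1 eqn sym} established in the remark after Theorem \ref{theorem1}.

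With this in hand, the next step is to evaluate $S$ at two complementary specializations. Setting $b=q$ collapses $(b/q)_{j-n}=(1)_{j-n}$ to the Kronecker symbol $\delta_{n,j}$, so only the $n=j$ term survives in the inner sum, and a one-line simplification gives
\[
S(q,d)=\sum_{j=1}^{\infty}\frac{q^j}{1-q^j}\Bigl(-\frac{d}{q}\Bigr)^{j}q^{j(j-1)/2}=\sum_{j=1}^{\infty}\frac{(-d)^jq^{j(j-1)/2}}{1-q^j},
\]
which is precisely the right-hand side of the corollary. Setting $d=q$ instead merely replaces the factor $(-d/q)^n$ by $(-1)^n$, producing
\[
S(d,q)=\sum_{j=1}^{\infty}\frac{q^j}{1-q^j}\sum_{n=0}^{j}(-1)^n\qbinom{j}{n}\Bigl(\frac{d}{q}\Bigr)_{j-n}q^{n(n-1)/2},
\]
which is exactly the left-hand side of the corollary. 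Equating $S(d,q)=S(q,d)$ finishes the proof.

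There is no genuine obstacle: the argument is conceptual rather than computational. The single point requiring a moment's care is noticing that the non-symmetric appearance of $b$ and $d$ in \eqref{sot5} is cosmetic, so that the \emph{a priori} non-trivial double sum obtained at $d=q$ must, by symmetry, collapse to the closed-form series produced by the much easier $b=q$ specialization. Alternatively, one can bypass the symmetry argument entirely and reach the same conclusion by setting $d=q$ directly in \eqref{sot5}, using Corollary \ref{diff2} (with its $d$ relabelled as $b$) to evaluate the resulting left-hand side, and splitting $(-1)^j(b^j-q^j)q^{j(j-1)/2}=(-b)^jq^{j(j-1)/2}-(-1)^jq^{j(j+1)/2}$ so that the stray theta-like tail cancels.
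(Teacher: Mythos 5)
Your proof is correct and is essentially the paper's own argument: the paper likewise exploits the $b\leftrightarrow d$ symmetry of \eqref{sot5} by setting $d=q$, relabelling $b$ as $d$, and equating the resulting right-hand side with that of Corollary \ref{diff2} (the $b=q$ specialization) --- which is exactly the ``alternative'' route you describe in your final paragraph. Your main presentation merely repackages this by collapsing the inner sum at $b=q$ directly (using $(1;q)_{j-n}=\delta_{n,j}$) instead of passing through Corollary \ref{diff2} and cancelling the $\sum_{j\geq1}(-1)^jq^{j(j+1)/2}/(1-q^j)$ tail.
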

\begin{proof}
Let $d=q$ in \eqref{sot5}. Then replacing $b$ by $d$ in the resulting identity, we get
\begin{align}\label{diff3 eqn}
	\sum_{n=0}^{\infty} (q^{n+1})_{\infty} \left((dq^n)_{\infty}-(q^{n+1})_{\infty}\right) =\sum_{j=1}^{\infty} \frac{q^j}{1-q^j} \sum_{n=0}^{j} (-1)^n\qbinom{j}{n} \left(\frac{d}{q}\right)_{j-n}  q^{n(n-1)/2}- \sum_{j=1}^{\infty} \frac{(-1)^jq^{j(j+1)/2}}{(1-q^j)}.
\end{align}
The result now follows by equating the right-hand sides of \eqref{diff2 eqn} and \eqref{diff3 eqn}.
\end{proof}
We now prove \eqref{series from ccd}.

\begin{proof}[Theorem \textup{\ref{thm series from ccd}}][]
We first differentiate both sides of Corollary \ref{diff1} with respect to $b$ and then let $b=q$. Observe that
\begin{align*}
\left.\frac{d}{db}(bq^n)_{\infty}^2\right|_{b=q}=-2(q^{n+1})_{\infty}^2\sum_{k=0}^{\infty}\frac{q^{n+k}}{1-q^{n+k+1}}.
\end{align*}
Also, for $n\neq j$,
\begin{align*}
\left.\frac{d}{db}\left(\frac{b}{q}\right)_{j-n}b^{n}\right|_{b=q}&=\left(\frac{b}{q}\right)_{j-n}\left\{nb^{n-1}-b^n\sum_{k=0}^{j-n-1}\frac{q^{k-1}}{1-bq^{k-1}}\right\}_{b=q}\nonumber\\
&=\left.-b^n\left(\frac{b}{q}\right)_{j-n}\frac{q^{-1}}{1-bq^{-1}}\right|_{b=q}\nonumber\\
&=-q^{n-1}(q)_{j-n-1}.
\end{align*}
Now separating out the $n=j$ term in the first sum on the right-hand side of Corollary \ref{diff1}, then differentiating both sides of the resulting identity with respect to $b$ followed by letting $b=q$, and using the above two results, yields
 \begin{align*}
 -2\sum_{n=0}^{\infty}(q^{n+1})_{\infty}^2\sum_{k=0}^{\infty}\frac{q^{n+k}}{1-q^{n+k+1}}=\sum_{j=1}^{\infty}\frac{(-1)^jjq^{-1+j(j+1)/2}}{1-q^j}-\sum_{j=1}^{\infty} \frac{q^j}{1-q^j} \sum_{n=0}^{j-1}\qbinom{j}{n} (-1)^nq^{\frac{n(n-1)}{2}-1}(q)_{j-n-1}
 \end{align*}
 Multiplying both sides by $q$ gives, upon simplification,
 \begin{align}\label{first eq}
 \sum_{j=1}^{\infty}\frac{(-1)^{j-1}jq^{j(j+1)/2}}{1-q^j}=2(q)_{\infty}^2\sum_{k=1}^{\infty}\left(\sum_{n=0}^{k-1}\frac{1}{(q)_n^2}\right)\frac{q^k}{1-q^k}-\sum_{j=1}^{\infty}\left(\sum_{n=0}^{j-1}\frac{(-1)^nq^{n(n-1)/2}}{(q)_n(1-q^{j-n})}\right)\frac{(q)_jq^j}{1-q^j}.
 \end{align} 
 Now differentiating both sides of Corollary \ref{diff2} with respect to $d$, letting $d=q$, and then multiplying both sides by $q$ results in
 \begin{align}\label{second eq}
 \sum_{j=1}^{\infty}\frac{(-1)^{j-1}jq^{j(j+1)/2}}{1-q^j}=(q)_{\infty}^2\sum_{k=1}^{\infty}\left(\sum_{n=0}^{k-1}\frac{1}{(q)_n^2}\right)\frac{q^k}{1-q^k}.
 \end{align}
 Equation \eqref{series from ccd} now follows from \eqref{first eq} and \eqref{second eq}.
\end{proof}	

\section{Some results on finite sums, an identity of Andrews and Onofri, and its application}
In a beautiful paper, Andrews and Onofri gave the following identity \cite[p.~182, Entry (7.2)]{lattice}\footnote{The factor of $q^m$ in the summand of the double sum is missing in \cite{lattice}.}
\begin{align}\label{a-o identity}
	\sum_{n=0}^{\infty}		\sum_{m=0}^{\infty}	\frac{(n-m)(b/a)_n(b/a)_ma^{n+m}q^m}{(q)_n(q)_m}=\frac{(a-b)(bq)_{\infty}^2}{(a)_{\infty}^2}.
\end{align}
It is valid for $|a|\leq|b|$ and $|q|<1$. In this section, we begin with a symmetric identity, namely \eqref{symmetry eqn}, and then use a corollary of \eqref{a-o identity} to derive an identity for a special case of the finite sum $b=d$ of \eqref{symmetry eqn}. It is, in turn, used to obtain an identity whose combinatorial proof is given too. 
\begin{theorem}\label{symmetry}
	For $b, d\in\mathbb{C}$,
	\begin{align}\label{symmetry eqn}
		\sum_{n=0}^{j} \qbinom{j}{n} \left(\frac{b}{q}\right)_{j-n} \left(\frac{-d}{q}\right)^n q^{n(n-1)/2}=\sum_{n=0}^{j} \qbinom{j}{n} \left(\frac{d}{q}\right)_{j-n} \left(\frac{-b}{q}\right)^nq^{n(n-1)/2}.
	\end{align}
\end{theorem}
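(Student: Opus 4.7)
The plan is to expand both sides into the same double $q$-sum and then exploit a symmetry of the $q$-trinomial coefficients. The key auxiliary identity is the finite form of Euler's theorem (the $q$-binomial theorem),
\[
(x;q)_m = \sum_{k=0}^{m}\qbinom{m}{k}(-x)^k q^{k(k-1)/2},
\]
which I will apply to both $(b/q;q)_{j-n}$ on the left and $(d/q;q)_{j-n}$ on the right.

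First I would expand the left-hand side. Substituting the expansion of $(b/q;q)_{j-n}$ and interchanging the order of summation yields
\[
\text{LHS}=\sum_{n=0}^{j}\sum_{k=0}^{j-n}\qbinom{j}{n}\qbinom{j-n}{k}\left(-\tfrac{b}{q}\right)^k\left(-\tfrac{d}{q}\right)^n q^{n(n-1)/2+k(k-1)/2}.
\]
Doing exactly the same on the right-hand side gives
\[
\text{RHS}=\sum_{n=0}^{j}\sum_{k=0}^{j-n}\qbinom{j}{n}\qbinom{j-n}{k}\left(-\tfrac{d}{q}\right)^k\left(-\tfrac{b}{q}\right)^n q^{n(n-1)/2+k(k-1)/2}.
\]

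Next I would invoke the well-known factorization
\[
\qbinom{j}{n}\qbinom{j-n}{k}=\frac{(q)_j}{(q)_n(q)_k(q)_{j-n-k}},
\]
which is manifestly symmetric in the indices $n$ and $k$. The exponent $n(n-1)/2+k(k-1)/2$ and the summation region $\{n,k\geq 0,\ n+k\leq j\}$ are also symmetric in $n,k$. Therefore interchanging the names of the summation variables $n$ and $k$ in the double sum for $\text{LHS}$ turns $(-b/q)^k(-d/q)^n$ into $(-b/q)^n(-d/q)^k$, producing exactly the double sum for $\text{RHS}$.

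Finally, repackaging the inner $k$-sum back into a $q$-Pochhammer symbol via the same expansion identity confirms the equality, completing the proof. There is no real obstacle here: the argument is a bookkeeping exercise, and the only thing that requires care is keeping track of the range of summation when interchanging the order, which is automatic from the symmetric description $\{n+k\leq j\}$.
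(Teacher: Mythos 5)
Your proof is correct, but it takes a genuinely different route from the paper's. The paper proves the symmetry by forming the generating function $\sum_{j\ge 0} T(j)z^j/(q)_j$, where $T(j)$ is the left-hand side of \eqref{symmetry eqn}, and showing via two applications of the $q$-binomial theorem that it equals $(dz/q)_{\infty}(bz/q)_{\infty}/(z)_{\infty}$, a product manifestly symmetric in $b$ and $d$; comparing coefficients of $z^j$ then gives the claim. You instead stay entirely at the level of a fixed $j$: expanding $(b/q)_{j-n}$ by the finite $q$-binomial theorem $(x;q)_m=\sum_{k=0}^{m}\qbinom{m}{k}(-x)^kq^{k(k-1)/2}$ turns each side into a double sum over $\{n,k\ge 0,\ n+k\le j\}$ weighted by the $q$-trinomial coefficient $(q)_j/\bigl((q)_n(q)_k(q)_{j-n-k}\bigr)$, and the identity reduces to the visible symmetry of that coefficient and of the exponent $n(n-1)/2+k(k-1)/2$ under $n\leftrightarrow k$. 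Your argument is more elementary — it is a finite, purely algebraic identity with no infinite series or convergence considerations, and it makes the source of the symmetry completely transparent. What the paper's route buys in exchange is the closed-form product representation \eqref{TJprod1} itself, which is not a throwaway: it is reused immediately afterwards (with $b=d$) in the proof of Theorem \ref{andrews-onofri}. Both proofs are complete and correct.
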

	\begin{proof}
	Let 
	\begin{equation*}
		T(j):=T(j, q):=\sum_{n=0}^{j} \qbinom{j}{n} \left(\frac{b}{q}\right)_{j-n} \left(\frac{-d}{q}\right)^n q^{n(n-1)/2}
	\end{equation*}
	Then
	\begin{align}\label{TJprod1}
		\sum_{j=0}^{\infty} \frac{T(j) z^j}{(q)_j} &= \sum_{j=0}^{\infty} \sum_{n=0}^{j} \qbinom{j}{n} \left(\frac{b}{q}\right)_{j-n} \left(\frac{-d}{q}\right)^n q^{n(n-1)/2} \frac{z^j}{(q)_j}\nonumber\\ &= \sum_{n=0}^{\infty} \frac{(-d/q)^nq^{n(n-1)/2}}{(q)_n} \sum_{j=n}^{\infty} \frac{(b/q)_{j-n}z^j}{(q)_{j-n}}\nonumber\\ &= \sum_{n=0}^{\infty} \frac{(-dz/q)^nq^{n(n-1)/2}}{(q)_n} \sum_{j=0}^{\infty} \frac{(b/q)_{j}z^j}{(q)_{j}}\nonumber\\ &= \frac{(dz/q)_{\infty}(bz/q)_{\infty}}{(z)_{\infty}},
	\end{align}
	where the last step follows from two special cases of the $q$-binomial theorem \cite[p.~9, Corollary (1.3.2)]{ntsr}. Since the $q$-product on the right-hand side of \eqref{TJprod1} is symmetric in $b$ and $d$, the result follows.  
\end{proof}
\begin{theorem}\label{andrews-onofri}
	For $b, d\in\mathbb{C}$,
	\begin{align}\label{theorem 5 eqn}
		\sum_{n=0}^{j} \qbinom{j}{n} \left(\frac{d}{q}\right)_{j-n} \left(\frac{-d}{q}\right)^n &q^{n(n-1)/2}\nonumber\\=\frac{(q)_j}{(1-d/q^2)} &\sum_{t=0}^{j+1} \sum_{n=0}^{t} \frac{(2n-t)(d/q^2)_n(d/q^2)_{t-n}(-1)^{j+1-t}q^{(t-n)+(j-t)(j-t+1)/2}}{(q)_n(q)_{t-n}(q)_{j+1-t}}.
	\end{align}
\end{theorem}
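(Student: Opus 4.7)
My plan is to leverage the generating function identity already computed in the proof of Theorem \ref{symmetry} and then invoke the Andrews--Onofri identity \eqref{a-o identity} after a carefully chosen specialization. Let $T_d(j)$ denote the left-hand side of \eqref{theorem 5 eqn}, i.e.\ $T(j)$ from the proof of Theorem \ref{symmetry} evaluated at $b=d$. Then \eqref{TJprod1} (with $b=d$) gives
\begin{equation*}
\sum_{j=0}^{\infty}\frac{T_d(j)z^j}{(q)_j}=\frac{(dz/q)_{\infty}^{2}}{(z)_{\infty}}.
\end{equation*}
So the problem reduces to finding an explicit power series expansion of the right-hand side and then reading off the coefficient of $z^j$.

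Next, I would apply \eqref{a-o identity} with $a=z$ and $b=dz/q^{2}$, so that $b/a=d/q^{2}$ and $bq=dz/q$. This yields
\begin{equation*}
\frac{(dz/q)_{\infty}^{2}}{(z)_{\infty}^{2}}=\frac{1}{z(1-d/q^{2})}\sum_{n=0}^{\infty}\sum_{m=0}^{\infty}\frac{(n-m)(d/q^{2})_n(d/q^{2})_m z^{n+m}q^m}{(q)_n(q)_m}.
\end{equation*}
(The $|a|\leq|b|$ restriction is harmless, since after clearing denominators both sides are formal power series in $z$, so the identity holds as an identity of formal series.) Multiplying by $(z)_{\infty}$, expanded by Euler's theorem \eqref{et} as $(z)_{\infty}=\sum_{k\geq 0}(-z)^{k}q^{k(k-1)/2}/(q)_k$, gives
\begin{equation*}
\frac{(dz/q)_{\infty}^{2}}{(z)_{\infty}}=\frac{1}{1-d/q^{2}}\sum_{k,n,m\geq 0}\frac{(-1)^{k}(n-m)(d/q^{2})_n(d/q^{2})_m q^{k(k-1)/2+m}}{(q)_k(q)_n(q)_m}\,z^{k+n+m-1}.
\end{equation*}

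To finish, I would extract the coefficient of $z^j$. Setting $t=n+m$ and $k=j+1-t$ (so $0\leq t\leq j+1$ and $0\leq n\leq t$ with $m=t-n$), one has $n-m=2n-t$, $q^m=q^{t-n}$, and $k(k-1)/2=(j-t)(j-t+1)/2$. Equating coefficients of $z^j$ with $T_d(j)/(q)_j$ and multiplying through by $(q)_j$ yields exactly the right-hand side of \eqref{theorem 5 eqn}. The only potentially delicate steps are verifying that $a=z,\ b=dz/q^{2}$ is a legal specialization (handled by viewing everything as a formal series in $z$) and the index bookkeeping $k+n+m=j+1$ that converts the triple sum into the double sum over $t$ and $n$; neither is a genuine obstacle, so the argument is essentially a manipulation-by-generating-functions proof.
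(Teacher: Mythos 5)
Your proposal is correct and follows essentially the same route as the paper: specialize \eqref{TJprod1} at $b=d$, apply \eqref{a-o identity} with $a=z$, $b=dz/q^{2}$, expand $(z)_{\infty}$ via \eqref{et}, and compare coefficients of $z^{j}$ using the substitution $t=n+m$, $k=j+1-t$. The only (harmless) addition is your remark on treating the specialization as a formal power series identity, which the paper leaves implicit.
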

\begin{proof}
Letting $b=d$ in \eqref{TJprod1}, we get
\begin{align}\label{lhs}
\frac{(dz/q)_{\infty}^2}{(z)_{\infty}}=\sum_{j=0}^{\infty}\bigg(\sum_{n=0}^{j} \qbinom{j}{n} \left(\frac{d}{q}\right)_{j-n} \left(\frac{-d}{q}\right)^n &q^{n(n-1)/2}\bigg)\frac{z^j}{(q)_j}.
\end{align}
Now \eqref{a-o identity} with $a=z$ and $b=dz/q^2$ gives
\begin{align}\label{TJprod2}
	\frac{(dz/q)_{\infty}^2}{(z)_{\infty}} &= \frac{(z)_{\infty}}{z(1-d/q^2)} \sum_{n=0}^{\infty} \sum_{m=0}^{\infty} \frac{(n-m)(d/q^2)_n(d/q^2)_mq^mz^{n+m}}{(q)_n(q)_m}\nonumber\\ &= \frac{1}{(1-d/q^2)} \sum_{n=0}^{\infty} \sum_{m=0}^{\infty} \sum_{k=0}^{\infty} \frac{(n-m)(d/q^2)_n(d/q^2)_m(-1)^kq^{m+k(k-1)/2}z^{n+m+k-1}}{(q)_n(q)_m(q)_k}\nonumber\\ &= \frac{1}{(1-d/q^2)} \sum_{j=0}^{\infty} \sum_{t=0}^{j+1} \sum_{n=0}^{t} \frac{(2n-t)(d/q^2)_n(d/q^2)_{t-n}(-1)^{j+1-t}q^{(t-n)+(j-t)(j-t+1)/2}z^j}{(q)_n(q)_{t-n}(q)_{j+1-t}},
\end{align}
where we used \eqref{et} to obtain the second step, and in the last step made the substitution $m+n+k-1=j$ and $m+n=t$. On comparing the coefficients of $z^j$ in (\ref{lhs}) and (\ref{TJprod2}), we get \eqref{theorem 5 eqn}.
\end{proof}
\begin{corollary}
	For any $j\geq0$,
	\begin{equation}\label{f12}
		\sum_{n=0}^{j}\frac{(-1)^nq^{n(n+1)/2}}{(q)_n}=\sum_{n=0}^{j}\frac{(j+1-n)(-1)^nq^{n(n-1)/2}}{(q)_n}=\frac{(j+1)(-1)^jq^{j(j+1)/2}}{(q)_j}-\sum_{n=0}^{j}\frac{n(-1)^nq^{n(n-1)/2}}{(q)_n}.
	\end{equation}
\end{corollary}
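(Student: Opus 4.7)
The plan is to derive the Corollary by specialising Theorem \ref{andrews-onofri} at $d = q^2$. Under this substitution, $(d/q)_{j-n}$ collapses to $(q)_{j-n}$ on the left-hand side of \eqref{theorem 5 eqn}, which cancels the denominator of $\qbinom{j}{n}$; what remains is $(q)_j \sum_{n=0}^{j}(-1)^n q^{n(n+1)/2}/(q)_n$, so the first expression of the Corollary is immediate on the left. The subtle part is the right-hand side, which at $d=q^2$ takes the indeterminate form $0/0$: the prefactor $1/(1-d/q^2)$ blows up, while $(d/q^2)_n = (1;q)_n$ vanishes for every $n \geq 1$.

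I will resolve the indeterminacy by writing $d = q^2(1-\epsilon)$ and expanding $(1-\epsilon;q)_n = \epsilon\,(q;q)_{n-1} + O(\epsilon^2)$ for $n\geq 1$, while $(1-\epsilon;q)_0 = 1$. Any term in the inner double sum with both $n \geq 1$ and $t-n \geq 1$ contributes at order $\epsilon^2$, so it dies after multiplication by $1/(1-d/q^2) \sim 1/\epsilon$. The only surviving contributions come from the boundary cases $(n=0,\,t\geq 1)$ and $(n = t \geq 1)$; pairing these by the common value of $n$ produces a net factor of $q^{(j-n)(j-n+1)/2}(1 - q^n)$ in the numerator, which cancels the $1-q^n$ arising from $(q)_n = (1-q^n)(q)_{n-1}$ in the denominator. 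Reindexing by $k = j+1-n$ and dividing by $(q)_j$ then yields $\sum_{k=0}^{j}(j+1-k)(-1)^k q^{k(k-1)/2}/(q)_k$, which is the first equality of the Corollary.

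For the second equality, I will first establish the auxiliary identity
\begin{equation*}
\sum_{n=0}^{j}\frac{(-1)^n q^{n(n-1)/2}}{(q)_n} = \frac{(-1)^j q^{j(j+1)/2}}{(q)_j},
\end{equation*}
which may be proved either by a one-line induction on $j$ or, more elegantly, by noting that its generating function $\sum_{j\geq 0} z^j \sum_{n=0}^{j}(-1)^n q^{n(n-1)/2}/(q)_n$ equals $(z;q)_\infty/(1-z) = (zq;q)_\infty$ via Euler's theorem \eqref{et}, whose coefficient of $z^j$ reads off the right-hand side. The second equality of the Corollary then follows at once by writing $(j+1-n) = (j+1) - n$ in the middle expression and applying this auxiliary identity to the resulting $\sum (-1)^n q^{n(n-1)/2}/(q)_n$ piece.

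The main obstacle will be the $\epsilon$-expansion of the right-hand side of \eqref{theorem 5 eqn}: one has to correctly identify which $(n,t)$ pairs survive at first order, track the signs through the $(-1)^{j+1-t}$ prefactors, and exploit the $(1-q^n)$ cancellation to collapse the two boundary contributions into a single clean sum before reindexing. Once that bookkeeping is in place, the remainder of the argument is purely algebraic.
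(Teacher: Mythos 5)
Your proposal is correct and follows essentially the same route as the paper: the first equality is obtained by letting $d\to q^2$ in Theorem \ref{andrews-onofri}, where only the $n=0$ and $n=t$ terms of the inner sum survive the $0/0$ limit and pair up to give $\sum_{t=1}^{j+1}t(-1)^{j+1-t}q^{(j-t)(j-t+1)/2}/(q)_{j+1-t}$, and the second equality comes from splitting $j+1-n=(j+1)-n$ together with the telescoping identity $\sum_{n=0}^{j}(-1)^nq^{n(n-1)/2}/(q)_n=(-1)^jq^{j(j+1)/2}/(q)_j$. The only immaterial differences are that you justify this last identity via Euler's theorem applied to $(z)_{\infty}/(1-z)=(zq)_{\infty}$ whereas the paper proves it combinatorially (noting in a remark that induction also works), and that in your pairing step the two boundary terms share a common value of $t$, not $n$ (a harmless notational slip).
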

\begin{proof}
The first identity follows upon letting $d\to q^2$ in Theorem \ref{andrews-onofri}. Indeed, upon taking this limit on both sides of \eqref{theorem 5 eqn} and observing that only the terms corresponding to $n=0$ and $n=t$ on the right-hand side survive, we obtain
\begin{align*}
(q)_j	\sum_{n=0}^{j}\frac{(-1)^nq^{n(n+1)/2}}{(q)_n}&=(q)_j\bigg\{\sum_{t=1}^{j+1}\frac{(-t)(q)_{t-1}(-1)^{j+1-t}q^{t+(j-t)(j-t+1)/2}}{(q)_{t}(q)_{j+1-t}}\nonumber\\
&\qquad\qquad\qquad+\sum_{t=1}^{j+1}\frac{t(q)_{t-1}(-1)^{j+1-t}q^{(j-t)(j-t+1)/2}}{(q)_{t}(q)_{j+1-t}}\bigg\}\nonumber\\
&=(q)_j\sum_{t=1}^{j+1}\frac{t(-1)^{j+1-t}q^{(j-t)(j-t+1)/2}}{(q)_{j+1-t}}\nonumber\\
&=(q)_j\sum_{t=0}^{j}\frac{(j+1-t)(-1)^tq^{t(t-1)/2}}{(q)_t},
\end{align*}
where in the last step, we replaced $t$ by $j+1-t$.

We now give a combinatorial proof of the same result which will naturally lead us to  the second equality as well.

We begin with the left-hand side of the first equality. Let $a_{e}(m, j)$ (resp. $a_o(m, j)$) denote the number of partitions of $m$ into distinct parts where the number of parts is even (resp. odd) and $\leq j$. Then
\begin{align*}
	\sum_{n=0}^{j}\frac{(-1)^nq^{n(n+1)/2}}{(q)_n}=\sum_{m=0}^{\infty}(a_e(m, j)-a_o(m, j))q^m.
\end{align*}
Next, we write
\begin{align}\label{f1}
\sum_{n=0}^{j}\frac{(j+1-n)(-1)^nq^{n(n-1)/2}}{(q)_n}=(j+1)\sum_{n=0}^{j}\frac{(-1)^nq^{n(n-1)/2}}{(q)_n}-\sum_{n=0}^{j}\frac{n(-1)^nq^{n(n-1)/2}}{(q)_n}.
\end{align}
Now $\frac{q^{n(n-1)/2}}{(q)_n}$ generates partitions into exactly $n$ or $n-1$ distinct parts. Therefore, the partitions into distinct parts exactly $n$ in number can come only from $\frac{q^{n(n+1)/2}}{(q)_{n+1}}$ and $\frac{q^{n(n-1)/2}}{(q)_n}$. The contribution in $\frac{(-1)^{n+1}q^{n(n+1)/2}}{(q)_{n+1}}+\frac{(-1)^{n}q^{n(n-1)/2}}{(q)_n}$
coming from the partitions into exactly $n$ number of parts thus gets nullified, because of opposite parity, for every $0\leq n\leq j-1$.  Thus, when we sum these terms from $n=0$ to $j$, we are left with only the partitions into exactly $j$ distinct parts with weight $(-1)^j$.  However, the latter are clearly generated by $\frac{(-1)^{j}q^{j(j+1)/2}}{(q)_j}$. We have thus proved
\begin{align}\label{f2}
\sum_{n=0}^{j}\frac{(-1)^nq^{n(n-1)/2}}{(q)_n}=\frac{(-1)^jq^{j(j+1)/2}}{(q)_j}.
\end{align}
Thus, from \eqref{f1} and \eqref{f2}, we obtain the second equality in \eqref{f12}. To complete the combinatorial proof of the first equality of \eqref{f12}, we now  show that
	\begin{equation}\label{f123}
	\frac{(j+1)(-1)^jq^{j(j+1)/2}}{(q)_j}-\sum_{n=0}^{j}\frac{n(-1)^nq^{n(n-1)/2}}{(q)_n}=\sum_{n=0}^{j}\frac{(-1)^nq^{n(n+1)/2}}{(q)_n}.
\end{equation}
We first analyze $\sum_{n=0}^{j}\frac{n(-1)^nq^{n(n-1)/2}}{(q)_n}$. The partitions into distinct parts, exactly $n$ in number, come from $\frac{(n+1)(-1)^{n+1}q^{n(n+1)/2}}{(q)_{n+1}}$ and $\frac{n(-1)^nq^{n(n-1)/2}}{(q)_n}$ with weights $(n+1)(-1)^{n+1}$ and $n(-1)^{n}$ respectively.  Thus, for every $0\leq n\leq j-1$, the partitions into exactly $n$ number of parts coming from  $\frac{(n+1)(-1)^{n+1}q^{n(n+1)/2}}{(q)_{n+1}}+\frac{n(-1)^{n}q^{n(n-1)/2}}{(q)_n}$ will have weight $(-1)^{n+1}$. Now the partitions into exactly $j$ distinct parts coming from $\frac{j(-1)^jq^{j(j-1)/2}}{(q)_j}$ will have weight $j(-1)^j$. 

Therefore, the partitions into exactly $j$ distinct parts resulting from $\frac{(j+1)(-1)^jq^{j(j+1)/2}}{(q)_j}-\frac{j(-1)^jq^{j(j-1)/2}}{(q)_j}$ will have weight $(-1)^j$. Therefore, the above analysis clearly gives \eqref{f123}. 
\end{proof}
\begin{remark}
The equality between the first and the third terms in \eqref{f12} can be proved by induction on $j$. Same is the case with \eqref{f2}.
\end{remark}

\section{Concluding Remarks}\label{cr}
It would be nice to see if the original question which led us to study the sums-of-tails identity in Theorem \ref{theorem1} can be investigated, namely, if there exists a sums-of-tails identity associated with
\begin{equation*}
	\sum_{n=0}^{\infty}\left(\frac{1}{(q;q^5)_{\infty}(q^4;q^5)_{\infty}}-\frac{1}{(q;q^5)_n(q^4;q^5)_n}\right),
\end{equation*}
and whether or not it involves $\sigma_2(q)$ defined in \eqref{sigma_2(q)}. 

Observe that Andrews' proof of \eqref{rsi1} and \eqref{rsi2} begins by obtaining alternate representations for their left-hand sides, namely \cite[Equations (2.3), (2.4)]{andrews1986},
\begin{align*}
\sum_{n=0}^{\infty}\left((-q;q)_{\infty}-(-q;q)_n\right)&=\sum_{k=1}^{\infty}kq^k(-q)_{k-1},\\
\sum_{n=0}^{\infty}\left(\frac{1}{(q;q^2)_{\infty}}-\frac{1}{(q;q^2)_{n+1}}\right)&=\sum_{k=0}^{\infty}\frac{kq^{2k+1}}{(q;q^2)_{k+1}}.
\end{align*}
We have derived an identity similar to the above for the sum in \eqref{sot4}, namely,
\begin{align}\label{sot another rep}
\sum_{n=0}^{\infty}\left(\frac{1}{(q;q^5)_{\infty}(q^4;q^5)_{\infty}}-\frac{1}{(q;q^5)_n(q^4;q^5)_n}\right)=\sum_{k=1}^{\infty}\frac{kq^{5k-4}}{(q;q^5)_k(q^4;q^5)_{k-1}}+\sum_{k=1}^{\infty}\frac{kq^{5k-1}}{(q;q^5)_k(q^4;q^5)_{k}}.
\end{align}
Indeed, this identity can be proved as follows.  Observe that any partition enumerated by $\frac{1}{(q;q^5)_{\infty}(q^4;q^5)_{\infty}}$ has to have its largest part either of the form $5k-4$ or $5k-1$, where $k\geq1$. Any partition with largest part $5k-4$ is counted $k$ times on the left-hand side of \eqref{sot another rep}, namely, once by each of its first $k$ terms, and is not counted by the rest. The logic is similar when the largest part of a partition is of the form $5k-1$. This establishes \eqref{sot another rep}. It would be nice to see if it is feasible to proceed from it towards obtaining a sums-of-tails identity similar to \eqref{rsi1} and \eqref{rsi2}.
	  
Andrews \cite{andrews1986} proved \eqref{rsi1} and \eqref{rsi2} using Ramanujan's reciprocity theorem, namely, for $|a|<1, |b|<1$,
\begin{align*}
	\rho(a, b)-\rho(b, a )=\left(\frac{1}{b}-\frac{1}{a}\right)\frac{(aq/b)_{\infty}(bq/a)_{\infty}(q)_{\infty}}{(-aq)_{\infty}(-bq)_{\infty}},
\end{align*}
where 
\begin{align*}
	\rho(a, b):=\left(1+\frac{1}{b}\right)\sum_{n=0}^{\infty}\frac{(-1)^nq^{n(n+1)/2}a^nb^{-n}}{(-aq)_{n}}.
\end{align*}
This raises a natural question - does there exist a corresponding reciprocity theorem associated with $\sigma_2(q)$? 
 Indeed, if it exists, it may throw light on the previous question. 
 
 \begin{center}
 \textbf{Acknowledgements}
 \end{center}
 The first author is supported by the Swarnajayanti Fellowship grant SB/SJF/2021-22/08 of ANRF (Government of India). The third author is supported by NBHM-DAE Fellowship (Government of India). Both the authors sincerely thank the respective funding agencies for their support.

\end{document}